\newtheorem{theorem}{Theorem}
\numberwithin{theorem}{section}
\newtheorem{corollary}[theorem]{Corollary}
\newtheorem{lemma}[theorem]{Lemma}
\newtheorem{proposition}[theorem]{Proposition}
\theoremstyle{definition}
\newtheorem{definition}[theorem]{Definition}
\newtheorem{remark}[theorem]{Remark}
\newtheorem{example}[theorem]{Example}
\newcommand{\rca}{\mathbf{RCA}}
\newcommand{\aca}{\mathbf{ACA}}
\newcommand{\supp}{\operatorname{supp}}
\newcommand{\otp}{\operatorname{otp}}
\newcommand{\rng}{\operatorname{rng}}
\newcommand{\en}{\operatorname{en}}
\newcommand{\lkb}{<_{\operatorname{KB}}}
\newcommand{\fix}{\operatorname{Fix}}
\newcommand{\npm}{\mathbb N^\infty_{-1}}
\title[Single fixed points of normal functions]{How strong are single fixed points\\ of normal functions?}
\author{Anton Freund}
\address{Fachbereich Mathematik, Technische Universit\"at Darmstadt, Schlossgartenstr.~7, 64289~Darmstadt, Germany}
\email{freund@mathematik.tu-darmstadt.de}
\begin{document}

\keywords{Normal function, fixed point, reverse mathematics, dilator, well-ordering principle, $\Pi^1_1$-induction }
\subjclass[2010]{03B30, 03F15, 03E10, 03D60.}

\begin{abstract}
In a recent paper by M.~Rathjen and the present author it has been shown that the statement ``every normal function has a derivative'' is equivalent to $\Pi^1_1$-bar induction. The equivalence was proved over $\mathbf{ACA_0}$, for a suitable representation of normal functions in terms of dilators. In the present paper we show that the statement ``every normal function has at least one fixed point'' is equivalent to $\Pi^1_1$-induction along the natural numbers.
\end{abstract}

\maketitle

{\let\thefootnote\relax\footnotetext{\copyright~The Association for Symbolic Logic 2020.\\
This is the accepted version of a publication in \emph{The Journal of Symbolic Logic} 85:2 (2020) 709-732.}

\section{Introduction}

Recall that a function from ordinals to ordinals is called normal if it is strictly increasing and continuous at limit stages. More explicitly, $f$ is a normal function if
\begin{enumerate}[label=(\roman*)]
 \item $\alpha<\beta$ implies $f(\alpha)<f(\beta)$ and
 \item we have $f(\lambda)=\sup_{\alpha<\lambda}f(\alpha)$ whenever $\lambda$ is a limit ordinal.
\end{enumerate}
Equivalently, the function $f$ is the unique increasing enumeration of a closed and unbounded (club) class of ordinals. To construct a fixed point of a normal function~$f$ it suffices to consider iterates: Recursively define
\begin{equation*}
 f^0(\alpha)=\alpha,\qquad f^{n+1}(\alpha)=f(f^n(\alpha)).
\end{equation*}
One readily checks that
\begin{equation*}
 f'(0)=\textstyle\sup_{n\in\mathbb N}f^n(0)
\end{equation*}
is the smallest ordinal with $f(f'(0))=f'(0)$ (use continuity at the limit $f'(0)$, except if $f(0)=0=f'(0)$). It is well-known that any normal function $f$ does in fact have a club class of fixed points. The normal function that enumerates this class is called the derivative of $f$ and will be denoted by $f'$.

As shown by M.~Rathjen and the present author~\cite{freund-rathjen_derivatives}, a suitable formalization of the statement that ``every normal function has a derivative" is equivalent to bar induction (also known as transfinite induction) for $\Pi^1_1$-formulas, with $\aca_0$ as base theory. Let us stress that this result is formulated within the framework of second order arithmetic (see~\cite{simpson09} for a comprehensive introduction). It relies on a suitable representation of normal functions, which uses J.-Y.~Girard's~\cite{girard-pi2} notion of dilator and related ideas by P.~Aczel~\cite{aczel-phd,aczel-normal-functors}. Details of the representation have been worked out in~\cite{freund-rathjen_derivatives} and will be recalled in Section~\ref{sect:representation} below. We should point out that the use of dilators does lead to some restrictions: In particular dilators with countable parameters cannot raise infinite cardinalities (cf.~\cite[Remark~2.3.6]{girard-pi2}), which means that a normal function such as $\alpha\mapsto\aleph_\alpha$ is beyond the scope of the present paper. On the other hand, dilators can be used to represent many normal functions that arise in proof theory and computability theory (cf.~\cite{schuette77,marcone-montalban}). They also support a rich theory of general constructions on normal functions. For the rest of this introduction we proceed in an informal manner, ignoring the difference between normal functions and their representations in second order arithmetic. Official versions of our results can be found in the following sections.

In view of the aforementioned result from~\cite{freund-rathjen_derivatives} it is natural to ask: How much induction is needed to ensure that every normal function has at least $\alpha$ fixed points? The proof of Theorem~5.8 from~\cite{freund-rathjen_derivatives} reveals that induction along $\omega\cdot\alpha$ is sufficient. Conversely, the proof of Corollary~3.13 from the same paper shows that $\alpha$ fixed points of a suitable normal function secure induction along~$\alpha$. If $\alpha$ is infinite, then these upper and lower bounds match, since any induction along $\omega\cdot\alpha$ can be reduced to an induction along $\alpha$, with a side induction along $\omega\leq\alpha$. Let us now argue that the answer for any finite $\alpha>0$ coincides with the one for $\alpha=1$: Given a normal function $f$, the idea is to construct the fixed points $f'(n)$ by recursion in the meta theory. Assuming that $f'(n)$ is given, we can consider the normal function $f_n$ with
\begin{equation*}
 f_n(\gamma)=f(f'(n)+1+\gamma).
\end{equation*}
The case of $\alpha=1$ provides the fixed point $f_n'(0)$, which is readily seen to be an upper bound for $f'(n+1)$. Let us indicate in which way this bound secures the value $f'(n+1)$ itself: In Section~4 of~\cite{freund-rathjen_derivatives} (see also Section~\ref{sect:ind-to-wf} of the present paper) it has been shown that notation systems for the fixed points of a given normal function can be constructed in $\rca_0$. The inequality $f'(n+1)\leq f_n'(0)$ corresponds to an embedding, which reduces the claim that the notation system for $f'(n+1)$ is well-founded to the same claim about $f_n'(0)$. The main result of the present paper solves the remaining case, by showing that suitable formalizations of the following statements are equivalent over $\aca_0$ (see Theorem~\ref{thm:main-result} for the precise result):
\begin{enumerate}
\item Every normal function has a fixed point.
\item The principle of $\Pi^1_1$-induction along the natural numbers holds.
\end{enumerate}
Since induction along $\omega\cdot n$ is readily reduced to induction along $\omega$, we can give the following uniform answer to the question from the beginning of the paragraph: The statement that every normal function has at least $\alpha$ fixed points is equivalent to $\Pi^1_1$-induction along $\omega\cdot\alpha$, for any ordinal $\alpha$.

As pointed out by the anonymous referee, statements~(1) and~(2) are also equivalent to a suitable formalization of the following (see~Remark~\ref{rmk:large-fixed-points} for details):
\begin{enumerate}\setcounter{enumi}{2}
\item Every normal function has arbitrarily large fixed points.
\end{enumerate}
More precisely, the formalization of~(3) will assert that any well order can be embedded into some fixed point of a given normal dilator. We emphasize that~(3) does not imply the existence of derivatives. This follows from the aforementioned result of Rathjen and the present author~\cite{freund-rathjen_derivatives}, together with the fact that $\Pi^1_1$-induction along~$\mathbb N$ is strictly weaker than $\Pi^1_1$-bar induction. In~\cite{freund-ordinal-exponentiation} it is shown that the existence of derivatives implies arithmetical comprehension. Hence the equivalence from~\cite{freund-rathjen_derivatives} does already hold over~$\rca_0$. We do not know whether the base theory of the present paper can be weakened as well.

In the rest of this introduction we give an informal argument for the equivalence between (1) and (2). The following sections will show that this argument can be made precise in an appropriate way (similarly to the formalization in~\cite{freund-rathjen_derivatives}). To show that (1) implies (2) we must establish induction for a $\Pi^1_1$-formula~$\varphi$. The Kleene normal form theorem (see~\cite[Lemma~V.1.4]{simpson09}) provides a family of trees $\mathcal T_n$ with
\begin{equation*}
\varphi(n)\leftrightarrow\text{``$\mathcal T_n$ is well-founded"}.
\end{equation*}
The premises of the induction statement for $\varphi$ can then be written as
\begin{gather*}
\text{``$\mathcal T_0$ is well-founded"},\\
\forall_n(\text{``$\mathcal T_n$ is well-founded"}\rightarrow\text{``$\mathcal T_{n+1}$ is well-founded"}).
\end{gather*}
Assume that these statements are witnessed by an ordinal $\alpha_0$ and a function $h$, in the sense that we have
\begin{gather*}
\otp(\mathcal T_0)\leq\alpha_0,\\
\forall_{n,\gamma}(\otp(\mathcal T_n)\leq\gamma\rightarrow\otp(\mathcal T_{n+1})\leq h(n,\gamma)),
\end{gather*}
where $\otp(\mathcal T)$ denotes the order type (or rank) of $\mathcal T$. To avoid the dependency on~$n$ we set $h_0(\gamma)=\sup_{n\in\mathbb N}h(n,\gamma)$. Now consider the normal function $f$ given by
\begin{equation*}
f(\delta)=\alpha_0+1+\sum_{\gamma<\delta}(h_0(\gamma)+1).
\end{equation*}
Note that the infinite sum corresponds to a transfinite recursion, in which the summand $h_0(\gamma)+1$ is added at the successor stage $\gamma+1$. This immediately yields
\begin{equation*}
\gamma+1\leq\delta\quad\Rightarrow\quad h_0(\gamma)+1\leq f(\delta).
\end{equation*}
By construction we have $\otp(\mathcal T_0)+1\leq\alpha_0+1\leq f(f'(0))=f'(0)$. Using the above we also see that $\otp(\mathcal T_n)+1\leq f'(0)$ implies
\begin{equation*}
\otp(\mathcal T_{n+1})+1\leq h(n,\otp(\mathcal T_n))+1\leq h_0(\otp(\mathcal T_n))+1\leq f(f'(0))=f'(0).
\end{equation*}
Using induction on $n$ we obtain
\begin{equation*}
\otp(\mathcal T_n)<f'(0)
\end{equation*}
for all $n\in\mathbb N$, where the reference to $f'(0)$ is secured by statement (1) above. This means that the fixed point $f'(0)$ witnesses the statement
\begin{equation*}
\forall_n\,\text{``$\mathcal T_n$ is well-founded"},
\end{equation*}
which corresponds to the conclusion $\forall_n\varphi(n)$ of induction for $\varphi$. To show that (2) implies (1) we will construct a notation system $\fix(f)$ for the first fixed point of a given normal function~$f$. As indicated above, this can be done in $\rca_0$. In order to establish (1) we must prove that $\fix(f)$ is well-founded. For this purpose we show that $\fix(f)$ is the union of initial segments that correspond to the iterates~$f^n(0)$. The well-foundedness of these initial segments can be established by induction on~$n$, as justified by statement (2).

\section{Representing normal functions by dilators}\label{sect:representation}

As mentioned in the introduction, Girard's~\cite{girard-pi2} notion of dilator and related ideas by Aczel~\cite{aczel-phd,aczel-normal-functors} make it possible to represent normal functions in the setting of second order arithmetic. Details of this representation have been worked out in~\cite[Section~2]{freund-rathjen_derivatives}. In the present section we recall the relevant parts of that paper. Once normal functions have been represented, it will be straightforward to define an appropriate notion of fixed point.

In order to define dilators we consider the category of linear orders, with strictly increasing functions (i.\,e.~embeddings) as morphisms. By the category of natural numbers we mean the full subcategory with the finite orders $n=\{0,\dots,n-1\}$ as objects. This is a small category that is equivalent to the category of finite orders. To witness the equivalence of categories we associate each finite order~$a$ with its increasing enumeration $\en_a:|a|\rightarrow a$. Each embedding $f:a\rightarrow b$ of finite orders is associated with a unique function $|f|:|a|\rightarrow|b|$ that satisfies
\begin{equation*}
f\circ\en_a=\en_b\circ|f|.
\end{equation*}
This turns $|\cdot|$ and $\en$ into a functor and a natural equivalence. We will usually omit the forgetful functor that maps a linear order to its underlying set. In particular~the finite subset functor $[\cdot]^{<\omega}$ with
\begin{align*}
[X]^{<\omega}&=\text{``the set of finite subsets of $X$"},\\
[f]^{<\omega}(a)&=\{f(x)\,|\,x\in a\}
\end{align*}
will also be applied when $X$ is a linear order. Conversely, an element $a\in[X]^{<\omega}$~will then be viewed as a suborder (rather than just a subset) of $X$. It is well-known that finite sets and functions can be coded by natural numbers. Hence the objects in the following definition can be represented by subsets of $\mathbb N$, which allows for a formalization in second order arithmetic.

\begin{definition}[$\rca_0$]\label{def:coded-prae-dilator}
 A coded prae-dilator consists of
 \begin{enumerate}[label=(\roman*)]
  \item a functor $T$ from the category of natural numbers to the category of linear orders, where the field of each order $T(n)$ is a subset of $\mathbb N$, and
  \item a natural transformation $\supp^T:T\Rightarrow[\cdot]^{<\omega}$ such that each $\sigma\in T(n)$ lies in the range of the function $T(\iota_\sigma\circ\en_\sigma):T(|\supp^T_n(\sigma)|)\rightarrow T(n)$, where
  \begin{equation*}
   |\supp^T_n(\sigma)|\xrightarrow{\mathmakebox[2em]{\en_\sigma}}\supp^T_n(\sigma)\xhookrightarrow{\mathmakebox[2em]{\iota_\sigma}}n=\{0,\dots,n-1\}
  \end{equation*}
  compose to the unique embedding with range $\supp^T_n(\sigma)\subseteq n$.
 \end{enumerate}
\end{definition}

In Example~\ref{ex:omega-X} below we describe a coded dilator that represents the normal function $\alpha\mapsto\omega^\alpha$ from ordinal arithmetic. First, however, we want to give a general account of the following crucial observation by Girard: Due to their high uniformity (which is ensured by functoriality and the existence of finite supports), coded prae-dilators can be extended beyond the category of natural numbers. A~concrete construction of this extension in second order arithmetic has been given in~\cite{freund-computable}:

\begin{definition}[$\rca_0$]\label{def:coded-prae-dilator-reconstruct}
 Given a coded prae-dilator $T=(T,\supp^T)$, we set
 \begin{equation*}
 D^T(X)=\{\langle a,\sigma\rangle\,|\,a\in[X]^{<\omega}\text{ and }\sigma\in T(|a|)\text{ and }\supp^T_{|a|}(\sigma)=|a|\}
 \end{equation*}
 for each order~$X$. To define a binary relation on $D^T(X)$ we stipulate
 \begin{equation*}
 \langle a,\sigma\rangle<_{D^T(X)}\langle b,\tau\rangle\quad\Leftrightarrow\quad T(|\iota_a^{a\cup b}|)(\sigma)<_{T(|a\cup b|)}T(|\iota_b^{a\cup b}|)(\tau),
\end{equation*}
where $\iota_a^{a\cup b}$ and $\iota_b^{a\cup b}$ denote the inclusion maps from $a$ resp.~$b$ into $a\cup b$.
\end{definition}

In the proof of~\cite[Lemma~2.4]{freund-computable}, the following result is established in a stronger base theory. It is straightforward to see that $\rca_0$ supports the relevant argument.

\begin{lemma}[$\rca_0$]
 Consider a coded prae-dilator $T$. If $X$ is a linear order, then so is $D^T(X)=(D^T(X),<_{D^T(X)})$.
\end{lemma}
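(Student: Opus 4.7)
The plan is to reduce every check to the fact that each $T(n)$ is already a linear order, by ``lifting'' elements $\langle a,\sigma\rangle,\langle b,\tau\rangle,\ldots$ of $D^T(X)$ into a common $T(|c|)$ for some finite $c\subseteq X$ containing the underlying supports. To make this work I would first prove a coherence lemma: for every finite $c\subseteq X$ with $a\cup b\subseteq c$, one has
\[
\langle a,\sigma\rangle<_{D^T(X)}\langle b,\tau\rangle\iff T(|\iota_a^c|)(\sigma)<_{T(|c|)}T(|\iota_b^c|)(\tau).
\]
This is immediate from functoriality of $T$ via the factorisations $\iota_a^c=\iota_{a\cup b}^c\circ\iota_a^{a\cup b}$ and $\iota_b^c=\iota_{a\cup b}^c\circ\iota_b^{a\cup b}$, because $T(|\iota_{a\cup b}^c|)$ is an embedding of linear orders and hence order-preserving.

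Once this lemma is in place, irreflexivity and transitivity are essentially formal. For irreflexivity, take $c=a$ and use $T(|\id_a|)=\id_{T(|a|)}$ together with irreflexivity of $<_{T(|a|)}$. For transitivity of $\langle a,\sigma\rangle<\langle b,\tau\rangle<\langle c,\rho\rangle$ I would set $d=a\cup b\cup c$, rewrite both strict inequalities inside $T(|d|)$ via the coherence lemma, and conclude by transitivity of $<_{T(|d|)}$.

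The one step that needs a genuine idea is trichotomy: I must show that distinct pairs are $<_{D^T(X)}$-comparable. If $a=b$ and $\sigma\neq\tau$, the coherence lemma reduces this to linearity of $T(|a|)$. The interesting case is $a\neq b$, where I would argue that the images $T(|\iota_a^{a\cup b}|)(\sigma)$ and $T(|\iota_b^{a\cup b}|)(\tau)$ are already distinct in $T(|a\cup b|)$. Naturality of $\supp^T$ together with the full-support condition $\supp^T_{|a|}(\sigma)=|a|$ yields
\[
\supp^T_{|a\cup b|}\bigl(T(|\iota_a^{a\cup b}|)(\sigma)\bigr)=[|\iota_a^{a\cup b}|]^{<\omega}(|a|),
\]
which is precisely the set of positions in $|a\cup b|$ occupied by elements of $a$; the analogous computation for $\tau$ gives the positions of $b$. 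Since $a\neq b$ as subsets of $X$, these position sets differ, so the two images have different supports and are therefore distinct elements of the linearly ordered set $T(|a\cup b|)$, which then compares them.

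The main obstacle is this last support-based distinctness argument: one has to unwind the definitions of $|\iota_\bullet^\bullet|$ and the naturality square for $\supp^T$ carefully, and verify that everything remains $\rca_0$-provable (there are no unbounded quantifiers, only manipulation of finite codes). But once this is in hand, all three linear-order axioms fall out of the coherence lemma together with the fact that every $T(n)$ is itself a linear order.
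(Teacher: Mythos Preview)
Your proof is correct and is precisely the standard argument. The paper itself does not give a self-contained proof of this lemma: it simply cites \cite[Lemma~2.4]{freund-computable} and remarks that the argument goes through in $\rca_0$. Your coherence lemma (lifting comparisons into a common $T(|c|)$ via functoriality) and the support-based distinctness argument for trichotomy are exactly what that cited proof does; indeed, the paper reproduces the very same support computation later, in the proof of Lemma~\ref{lem:fix-T-linear}, where it recovers $a$ from $T(|\iota_a^{a\cup b}|)(\sigma)$ via naturality of $\supp^T$ and the condition $\supp^T_{|a|}(\sigma)=|a|$. So there is no genuine difference in approach to report.
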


Having extended prae-dilators to arbitrary linear orders, we can now consider the preservation of well-foundedness (note that the two obvious definitions of well-order are equivalent over~$\rca_0$, see e.\,g.~\cite[Lemma~2.3.12]{freund-thesis}):

\begin{definition}[$\rca_0$]\label{def:coded-dilator}
 A coded prae-dilator~$T$ is called a coded dilator if $D^T(X)$ is well-founded for every well-order $X$ on a subset of~$\mathbb N$.
\end{definition}

Let us discuss how Definition~\ref{def:coded-prae-dilator-reconstruct} allows us to represent arbitrary prae-dilators: Working in a suitable base theory, it is natural to define a class-sized prae-dilator as a functor from linear orders to linear orders, together with a natural transformation as in part~(ii) of Definition~\ref{def:coded-prae-dilator} (cf.~\cite[Definition~1.1]{freund-computable} for full details). If $T$ is a class-sized prae-dilator with countable fields $T_n\subseteq\mathbb N$, then its restriction $T\!\restriction\!\mathbb N$ to the category of natural numbers is a coded prae-dilator. Conversely, Proposition~2.5 of~\cite{freund-computable} shows that we get isomorphisms $D^{T\restriction\mathbb N}(X)\cong T(X)$ by stipulating
\begin{equation*}
 D^{T\restriction\mathbb N}(X)\ni\langle a,\sigma\rangle\mapsto T(\iota_a\circ\en_a)(\sigma)\in T(X),
\end{equation*}
where $\iota_a:a\hookrightarrow X$ is the inclusion. The condition $\supp^T_{|a|}(\sigma)=|a|$ from the definition of $D^T(X)$ is crucial for injectivity, as will become clear in Example~\ref{ex:omega-X}. If we extend Definition~\ref{def:coded-prae-dilator-reconstruct} by the clauses
\begin{align*}
 D^T(f)(\langle a,\sigma\rangle)&=\langle [f]^{<\omega}(a),\sigma\rangle,\\
 \supp^{D^T}_X(\langle a,\sigma\rangle)&=a,
\end{align*}
then $D^{T\restriction\mathbb N}$ becomes a class-sized prae-dilator that is isomorphic to~$T$. Concerning the preservation of well-foundedness, we point out that $T(X)$ is well-founded for every well-order~$X$ if the same holds for every countable well-order with field \mbox{$X\subseteq\mathbb N$} (see~\cite[Theorem~2.1.15]{girard-pi2}). Altogether this means that coded \mbox{(prae-)} dilators correspond to class-sized \mbox{(prae-)} dilators that map finite orders to at most countable ones. Only coded (prae-) dilators will play an official role in the present paper. For the sake of readability we will often omit the specification ``coded''.

Let us also point out that our definition of dilators is equivalent to the original one by Girard: In~\cite[Remark~2.2.2]{freund-thesis} it has been verified that an endofunctor~$T$ of linear orders preserves direct limits and pull-backs if, and only if, there is an (automatically unique and hence natural) transformation $\supp^T$ as in part~(ii) of Definition~\ref{def:coded-prae-dilator}. On the other hand, there is a small difference between Girard's pre-dilators and our prae-dilators (hence the particular spelling): The former fulfill a monotonicity condition that is automatic for well-orders (i.\,e.~in the case of dilators).

As promised above, we now show how the function $\alpha\mapsto\omega^\alpha$ from ordinal arithmetic can be implemented as a dilator:

\begin{example}\label{ex:omega-X}
For each order $X=(X,<_X)$ we consider the set
\begin{equation*}
\omega^X=\{\omega^{x_1}+\dots+\omega^{x_n}\,|\,x_1\geq_X\dots\geq_X x_n\}
\end{equation*}
with the lexicographic order (Cantor normal forms). Each embedding $f:X\rightarrow Y$ induces an embedding $\omega^f:\omega^X\rightarrow\omega^Y$, which is give by the clause
\begin{equation*}
\omega^f(\omega^{x_1}+\dots+\omega^{x_n})=\omega^{f(x_1)}+\dots+\omega^{f(x_n)}.
\end{equation*}
To define a family of functions $\supp^\omega_X:\omega^X\rightarrow[X]^{<\omega}$ we set
\begin{equation*}
\supp^\omega_X(\omega^{x_1}+\dots+\omega^{x_n})=\{x_1,\dots,x_n\}.
\end{equation*}
It is straightforward to verify that the given functions (or their restrictions to the category of natural numbers) form a prae-dilator, and indeed a dilator: If~$X$ has order-type~$\alpha$, then $\omega^X$ has order-type $\omega^\alpha$, in the usual sense of ordinal arithmetic. In the context of reverse mathematics, the statement that $X\mapsto\omega^X$ preserves well-foundedness is equivalent to arithmetical comprehension (due to J.-Y.~Girard~\cite{girard87}; cf.~also the computability-theoretic proof by J.~Hirst~\cite{hirst94}). Concerning Definition~\ref{def:coded-prae-dilator-reconstruct}, we point out that $\langle\{1,\omega\},\omega^1+\omega^0\rangle$ lies in $D^\omega(\omega+1)$ while $\langle\{1,5,\omega\},\omega^2+\omega^0\rangle$ does not (since $\supp^\omega_3(\omega^2+\omega^0)=\{0,2\}\neq 3$). The map $\langle a,\sigma\rangle\mapsto T(\iota_a\circ\en_a)(\sigma)$ sends both pairs to the element $\omega^\omega+\omega^1\in\omega^{\omega+1}$.
\end{example}

Each (coded) dilator $T$ induces a function from ordinals to ordinals, given by
\begin{equation*}
\alpha\mapsto\otp(D^T(\alpha)),
\end{equation*}
where $\otp(X)$ is the order type of (i.\,e.~the ordinal isomorphic to) the well-order~$X$. To see that this function does not need to be normal we consider the transformation
\begin{equation*}
X\mapsto T(X)=X\cup\{\top\}
\end{equation*}
that extends an order $X$ by a new maximal element $\top$. We get a dilator by setting
\begin{equation*}
 T(f)(\sigma)=\begin{cases}
               f(\sigma) & \text{if $\sigma\in X$},\\
               \top      & \text{if $\sigma=\top$},
              \end{cases}
 \qquad
 \supp^T_X(\sigma)=\begin{cases}
               \{\sigma\} & \text{if $\sigma\in X$},\\
               \emptyset  & \text{if $\sigma=\top$}.
              \end{cases}
\end{equation*}
The induced function $\alpha\mapsto\alpha+1$ fails to be continuous at limit ordinals and does not have any fixed points. Before we restore the focus on normal functions, let us mention that dilators that induce discontinuous functions are at least as interesting: In~\cite{freund-equivalence,freund-categorical,freund-computable} it has been shown that $\Pi^1_1$-comprehension is equivalent to the statement that every dilator $T$ admits a certain type of collapsing function $\vartheta:T(X)\rightarrow X$ for some well-order~$X$ (note that $\vartheta$ cannot be fully order preserving if $T(X)=X\cup\{\top\}$).

In order to analyse the example from the previous paragraph we observe that~$T$ does not preserve initial segments: It can happen that the range of $f:X\rightarrow Y$ is an initial segment of $Y$, while the range of $T(f):T(X)\rightarrow T(Y)$ fails to be an initial segment of $T(Y)$ (since it contains the element~$\top$). Indeed, Aczel~\cite{aczel-phd,aczel-normal-functors} and Girard~\cite{girard-pi2} have observed that preservation of initial segments ensures continuity at limit ordinals (cf.~the proof of Proposition~\ref{thm:normal-dil-fct} below). We will be particularly interested in initial segments of the form
\begin{equation*}
X\!\restriction\! x=\{x'\in X\,|\,x'<_X x\},
\end{equation*}
where $x$ is an element of the order~$X$. The functions $\mu^T_n$ in the following definition correspond to the restrictions $f\!\restriction\!\alpha:\alpha\rightarrow f(\alpha)$ of a normal function~$f$. In this sense we can view $\mu^T$ as an internal version of $T$ (see also Example~\ref{ex:omega-X-normal} below).

\begin{definition}[$\rca_0$]\label{def:normal-dil}
 A normal (prae-) dilator consists of a coded (prae-) dilator $T$ and a natural family of embeddings $\mu^T_n:n\rightarrow T(n)$ such that
 \begin{equation*}
  \sigma\in T(n)\!\restriction\!\mu^T_n(m)\quad\Leftrightarrow\quad\supp^T_n(\sigma)\subseteq m=\{0,\dots,m-1\}
 \end{equation*}
 holds for any numbers $m<n$ and an arbitrary element $\sigma\in T(n)$.
\end{definition}

Note that the single element $\mu^T_1(0)\in T(1)$ determines the entire family $\mu^T$, due to naturality: For $\iota:1\rightarrow n$ with $\iota(0)=m$ we have $\mu^T_n(m)=T(\iota)(\mu^T_1(0))$. The following result from~\cite{freund-rathjen_derivatives} will be needed to extend $\mu^T$ beyond the natural numbers.

\begin{lemma}[$\rca_0$]
 Assume that $T=(T,\mu^T)$ is a normal prae-dilator. Then
 \begin{equation*}
  \supp^T_n(\mu^T_n(m))=\{m\}
 \end{equation*}
 holds for arbitrary numbers $m<n$.
\end{lemma}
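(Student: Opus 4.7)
My plan is to reduce the claim to the single instance $m = 0$, $n = 1$ by exploiting the two naturality relations and then rule out the degenerate subcase by an irreflexivity argument.

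First I would observe that, for any $m < n$, the map $\iota : 1 \to n$ with $\iota(0) = m$ is an embedding in the category of natural numbers. Naturality of $\mu^T$ (which the paper has already noted) yields $\mu^T_n(m) = T(\iota)(\mu^T_1(0))$, and naturality of $\supp^T$ then gives
\begin{equation*}
 \supp^T_n(\mu^T_n(m)) = [\iota]^{<\omega}\bigl(\supp^T_1(\mu^T_1(0))\bigr).
\end{equation*}
Since $\supp^T_1(\mu^T_1(0)) \subseteq 1 = \{0\}$, there are only two possibilities, and in the case $\supp^T_1(\mu^T_1(0)) = \{0\}$ the display above evaluates to $\{\iota(0)\} = \{m\}$, which is what we want.

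The remaining task is therefore to exclude $\supp^T_1(\mu^T_1(0)) = \emptyset$. Here I would apply the defining biconditional of a normal prae-dilator (Definition~\ref{def:normal-dil}) with $n = 2$, $m = 0$ and $\sigma = \mu^T_2(0)$: irreflexivity of $<_{T(2)}$ ensures that $\mu^T_2(0) \notin T(2)\!\restriction\!\mu^T_2(0)$, so the biconditional forces $\supp^T_2(\mu^T_2(0)) \not\subseteq \emptyset$, i.e.\ $\supp^T_2(\mu^T_2(0)) \neq \emptyset$. On the other hand, the reduction from the previous paragraph (instantiated at $m=0$, $n=2$) would give $\supp^T_2(\mu^T_2(0)) = [\iota]^{<\omega}(\supp^T_1(\mu^T_1(0))) = \emptyset$ if the support at stage~$1$ were empty, a contradiction. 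Hence $\supp^T_1(\mu^T_1(0)) = \{0\}$, and the lemma follows.

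I do not anticipate a genuine obstacle here; the only step that requires a little care is to make sure that naturality is applied with the correct embedding $\iota : 1 \to n$ sending $0$ to $m$, and that the finite-subset functor $[\cdot]^{<\omega}$ is evaluated in accordance with Definition~\ref{def:coded-prae-dilator}. The argument is entirely within $\rca_0$, since it uses only finitely many instances of $T$, $\mu^T$ and $\supp^T$ together with the irreflexivity of a given linear order.
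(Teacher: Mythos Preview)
Your argument is correct and follows the same strategy as the paper: reduce via naturality of $\mu^T$ and $\supp^T$ to the containment $\supp^T_n(\mu^T_n(m))\subseteq\{m\}$, then use the normality biconditional together with irreflexivity to rule out the empty case. The only cosmetic difference is that the paper applies the biconditional directly at the given $n$ and $m$ (so that $\supp^T_n(\mu^T_n(m))=\emptyset$ would yield $\mu^T_n(m)<_{T(n)}\mu^T_n(m)$), whereas you take a small detour through the instance $n=2$, $m=0$; either choice works, and in fact $n=1$, $m=0$ would already suffice.
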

\begin{proof}
 Writing $\mu^T_n(m)=T(\iota)\circ\mu^T_1(0)$ as above, the naturality of $\supp^T$ yields
 \begin{equation*}
  \supp^T_n(\mu^T_n(m))=[\iota]^{<\omega}(\supp^T_1(\mu^T_1(0)))\subseteq\rng(\iota)=\{m\}.
 \end{equation*}
 Now it suffices to observe that $\supp^T_n(\mu^T_n(m))=\emptyset$ must fail: Otherwise we would have $\supp^T_n(\mu^T_n(m))\subseteq m$ and hence $\mu^T_n(m)\in T(n)\!\restriction\!\mu^T_n(m)$.
\end{proof}

In particular the lemma provides $\supp^T_1(\mu^T_1(0))=1$. Considering Definition~\ref{def:coded-prae-dilator-reconstruct}, this is needed to justify the following:

\begin{definition}[$\rca_0$]
 Given a normal prae-dilator $T$, we define a family of functions $D^{\mu^T}_X:X\rightarrow D^T(X)$ by setting
 \begin{equation*}
  D^{\mu^T}_X(x)=\langle\{x\},\mu^T_1(0)\rangle
 \end{equation*}
 for any order $X$ and any element $x\in X$.
\end{definition}

The following result tells us that the defining property of $\mu^T$ extends beyond the category of natural numbers. We refer to~\cite[Proposition~2.11]{freund-rathjen_derivatives} for a proof.

\begin{proposition}[$\rca_0$]\label{prop:normal-dilator-extend}
 Consider a normal prae-dilator~$T$. We have
 \begin{equation*}
  \langle a,\sigma\rangle\in D^T(X)\!\restriction\!D^{\mu^T}_X(x)\quad\Leftrightarrow\quad a\subseteq X\!\restriction\! x
 \end{equation*}
 for any linear order $X$ and any element $\langle a,\sigma\rangle\in D^T(X)$. Furthermore, the functions $D^{\mu^T}_X:X\rightarrow D^T(X)$ form a natural family of embeddings.
\end{proposition}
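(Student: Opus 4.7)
The plan is to derive the claimed equivalence by directly unfolding the definition of $<_{D^T(X)}$, and then to read off both the embedding and naturality claims as short consequences.

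For the equivalence, I write $b=a\cup\{x\}$ (viewed as a finite suborder of $X$) and let $k\in|b|$ denote the index of $x$ inside $b$, so that $|\iota_{\{x\}}^b|(0)=k$ and hence, by naturality of $\mu^T$,
\[
T(|\iota_{\{x\}}^b|)(\mu^T_1(0))=\mu^T_{|b|}(k).
\]
Unfolding Definition~\ref{def:coded-prae-dilator-reconstruct}, the relation $\langle a,\sigma\rangle<_{D^T(X)}D^{\mu^T}_X(x)$ then amounts to $T(|\iota_a^b|)(\sigma)<_{T(|b|)}\mu^T_{|b|}(k)$, which by the defining clause of a normal prae-dilator (Definition~\ref{def:normal-dil}) is equivalent to
\[
\supp^T_{|b|}\bigl(T(|\iota_a^b|)(\sigma)\bigr)\subseteq k.
\]
Naturality of $\supp^T$ together with the fullness condition $\supp^T_{|a|}(\sigma)=|a|$ that witnesses $\langle a,\sigma\rangle\in D^T(X)$ rewrites the left-hand side as $[|\iota_a^b|]^{<\omega}(|a|)=\rng(|\iota_a^b|)$. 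The inclusion $\rng(|\iota_a^b|)\subseteq k$ holds precisely when the index in $b$ of every element of $a$ is strictly below $k$, and translating this back to $X$ yields exactly $a\subseteq X\!\restriction\! x$. The edge case $x\in a$ is handled uniformly: then $b=a$ and $k$ is the index of $x\in a$, so $\rng(|\iota_a^b|)=|a|\not\subseteq k$, matching the fact that $a\not\subseteq X\!\restriction\! x$.

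The embedding property then drops out by applying the equivalence to $\langle a,\sigma\rangle=D^{\mu^T}_X(y)=\langle\{y\},\mu^T_1(0)\rangle$: we get $D^{\mu^T}_X(y)<_{D^T(X)}D^{\mu^T}_X(x)\Leftrightarrow\{y\}\subseteq X\!\restriction\! x\Leftrightarrow y<_X x$, which is exactly what is needed for $D^{\mu^T}_X$ to be an order-embedding. Naturality is a direct calculation using the extension of Definition~\ref{def:coded-prae-dilator-reconstruct} to morphisms: for any embedding $f:X\to Y$,
\[
D^T(f)(D^{\mu^T}_X(x))=D^T(f)(\langle\{x\},\mu^T_1(0)\rangle)=\langle\{f(x)\},\mu^T_1(0)\rangle=D^{\mu^T}_Y(f(x)).
\]

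The main obstacle is the bookkeeping in the first step: one must carefully distinguish the set-level inclusions $\iota_a^b$ of finite suborders of $X$ from their index-level counterparts $|\iota_a^b|$, and deploy the fullness condition $\supp^T_{|a|}(\sigma)=|a|$ at exactly the right moment so that the naturality square for $\supp^T$ converts a support of a transported element into the range of an index-level inclusion. Once this conversion is performed, the rest of the argument is essentially formal.
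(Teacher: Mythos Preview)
Your argument is correct. The paper itself does not give a proof of this proposition but refers to \cite[Proposition~2.11]{freund-rathjen_derivatives}; your direct unfolding of Definition~\ref{def:coded-prae-dilator-reconstruct}, combined with naturality of $\mu^T$ and $\supp^T$ and the normality clause of Definition~\ref{def:normal-dil}, is exactly the intended route and matches the argument in the cited reference.
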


Let us extend Example~\ref{ex:omega-X} to accommodate the new notions:

\begin{example}\label{ex:omega-X-normal}
 To turn $X\mapsto\omega^X$ into a normal dilator we consider the functions
 \begin{equation*}
  \mu^\omega_X:X\rightarrow\omega^X,\qquad\mu^\omega_X(x)=\omega^x.
 \end{equation*}
Observe that $\omega^{x_1}+\dots+\omega^{x_n}\in\omega^X\!\restriction\!\omega^x$ is equivalent to $\{x_1,\dots,x_n\}\subseteq X\!\restriction\!x$, as required by Definition~\ref{def:normal-dil}. Let us also point out that the aforementioned function $\langle a,\sigma\rangle\mapsto T(\iota_a\circ\en_a)(\sigma)$ maps $D^{\mu^\omega}_X(x)=\langle\{x\},\omega^0\rangle\in D^\omega(X)$ to $\omega^x\in\omega^X$.
\end{example}

In a suitable meta theory one can establish the following result, which is due to Aczel~\cite[Theorem~2.11]{aczel-phd}. The given proof is very similar to the one in~\cite{freund-rathjen_derivatives}.

\begin{theorem}\label{thm:normal-dil-fct}
 Any normal dilator induces a normal function.
\end{theorem}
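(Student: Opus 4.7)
The plan is to verify that the function $f(\alpha)=\otp(D^T(\alpha))$ induced by a normal dilator $T=(T,\mu^T)$ is strictly increasing and continuous at limit ordinals, treating each property in turn. The key tool is Proposition~\ref{prop:normal-dilator-extend}, which identifies the initial segment of $D^T(X)$ below $D^{\mu^T}_X(x)$ with the set of pairs $\langle a,\sigma\rangle$ satisfying $a\subseteq X\!\restriction\!x$.

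For strict monotonicity, fix ordinals $\alpha<\beta$. The inclusion $\iota:\alpha\hookrightarrow\beta$ extends, via the clauses listed just after Definition~\ref{def:coded-dilator}, to an embedding $D^T(\iota):D^T(\alpha)\to D^T(\beta)$ acting as $\langle a,\sigma\rangle\mapsto\langle[\iota]^{<\omega}(a),\sigma\rangle=\langle a,\sigma\rangle$. I would then check that its image coincides exactly with the initial segment $D^T(\beta)\!\restriction\!D^{\mu^T}_\beta(\alpha)$: by Proposition~\ref{prop:normal-dilator-extend} that initial segment consists of the pairs $\langle a,\sigma\rangle$ with $a\subseteq\beta\!\restriction\!\alpha=\alpha$, and these are precisely the elements in the range of $D^T(\iota)$. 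Hence $D^T(\alpha)$ is order-isomorphic to a proper initial segment of $D^T(\beta)$, so $f(\alpha)<f(\beta)$.

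For continuity at a limit $\lambda$, the inequality $\sup_{\alpha<\lambda}f(\alpha)\leq f(\lambda)$ is immediate from strict monotonicity. For the reverse, consider an arbitrary $\langle a,\sigma\rangle\in D^T(\lambda)$. Because $a$ is a finite subset of $\lambda$ and $\lambda$ is a limit, I can pick some $x<\lambda$ strictly above every element of $a$, which forces $a\subseteq\lambda\!\restriction\!x$. By Proposition~\ref{prop:normal-dilator-extend} this places $\langle a,\sigma\rangle$ in $D^T(\lambda)\!\restriction\!D^{\mu^T}_\lambda(x)$, and the monotonicity argument above identifies that initial segment with a copy of $D^T(x)$ of order type $f(x)$. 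Thus every element of $D^T(\lambda)$ has rank below some $f(x)$ with $x<\lambda$, which yields $f(\lambda)\leq\sup_{x<\lambda}f(x)$. The only point worth double-checking is that the image of the inclusion-induced embedding really matches the initial segment below $D^{\mu^T}_\beta(\alpha)$, but this falls out of Proposition~\ref{prop:normal-dilator-extend} together with the explicit formula for $D^T$ on morphisms, so no genuine obstacle arises; the argument is essentially a bookkeeping combination of that proposition with the general fact that the order type of a well-order equals the supremum of the order types of its proper initial segments cofinal in it.
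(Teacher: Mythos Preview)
Your argument is correct and follows essentially the same route as the paper: both proofs hinge on Proposition~\ref{prop:normal-dilator-extend} to identify $D^T(\beta)\!\restriction\!D^{\mu^T}_\beta(\alpha)$ with (a copy of) $D^T(\alpha)$, yielding strict monotonicity, and then cover $D^T(\lambda)$ by these initial segments to get continuity. The only cosmetic difference is that you phrase the identification via the functorial embedding $D^T(\iota)$, while the paper writes $D^T(\beta\!\restriction\!\alpha)$ directly as the suborder $\{\langle a,\sigma\rangle\in D^T(\beta)\mid a\subseteq\alpha\}$; since $D^T(\iota)$ acts as the identity on pairs, these amount to the same thing.
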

\begin{proof}
 Let $T$ be a normal dilator. Considering Definition~\ref{def:coded-prae-dilator-reconstruct}, we see that
 \begin{equation*}
  D^T(X\!\restriction\!x)=\{\langle a,\sigma\rangle\in D^T(X)\,|\,a\subseteq X\!\restriction\!x\}
 \end{equation*}
 is a suborder of $D^T(X)$ (see~\cite[Lemma~2.6]{freund-rathjen_derivatives} for a more general result). Since $T$ is normal, the previous proposition allows us to conclude
 \begin{equation*}
  D^T(X\!\restriction\!x)=D^T(X)\!\restriction\!D^{\mu^T}_X(x).
 \end{equation*}
 To show that the function $\alpha\mapsto\otp(D^T(\alpha))$ induced by $T$ is strictly increasing we consider $\alpha<\beta$. The usual set-theoretic definition of ordinals yields~$\beta\!\restriction\!\alpha=\alpha$. Using the above we obtain
 \begin{equation*}
  \otp(D^T(\alpha))=\otp(D^T(\beta\!\restriction\!\alpha))=\otp(D^T(\beta)\!\restriction\!D^{\mu^T}_\beta(\alpha))<\otp(D^T(\beta)),
 \end{equation*}
 as required. To show that $\alpha\mapsto\otp(D^T(\alpha))$ is continuous it remains to prove
 \begin{equation*}
  \otp(D^T(\lambda))\leq\textstyle\sup_{\alpha<\lambda}\otp(D^T(\alpha))
 \end{equation*}
 for an arbitrary limit ordinal~$\lambda$. Given any element $\langle a,\sigma\rangle\in D^T(\lambda)$, we pick an~$\alpha<\lambda$ with $a\subseteq\alpha=\lambda\!\restriction\!\alpha$. By the above we get $\langle a,\sigma\rangle\in D^T(\lambda)\!\restriction\!D^{\mu^T}_\lambda(\alpha)$ and hence
 \begin{equation*}
  \otp(D^T(\lambda)\!\restriction\!\langle a,\sigma\rangle)<\otp(D^T(\lambda)\!\restriction\!D^{\mu^T}_\lambda(\alpha))=\otp(D^T(\alpha))\leq\textstyle\sup_{\alpha<\lambda}\otp(D^T(\alpha)).
 \end{equation*}
 Since $\langle a,\sigma\rangle\in D^T(\lambda)$ was arbitrary this yields the desired inequality.
\end{proof}

Once normal prae-dilators have been defined, it is straightforward to find a suitable notion of fixed point (the reader may wish to compare this with the categorical definition of derivatives in~\cite{freund-rathjen_derivatives}, which is considerably more involved):

\begin{definition}[$\rca_0$]\label{def:fixed-point}
 A fixed point of a normal prae-dilator $T$ consists of an order~$X$ and an embedding $\xi:D^T(X)\rightarrow X$. We say that the fixed point is well-founded if the order~$X$ has this property.
\end{definition}

The reader might wonder whether the function $\xi$ from the previous definition should be an isomorphism. One could also focus on the initial fixed point of $T$, which should be embeddable into any other. In view of Theorems~\ref{thm:fixed-point-additional} and~\ref{thm:main-result} our main result remains valid for fixed points with these additional properties. The following observation, which requires a suitable base theory, provides an extensional justification for the given definition:

\begin{corollary}
 Consider a normal dilator~$T$. If $\alpha$ and $\xi:D^T(\alpha)\rightarrow\alpha$ form a fixed point of $T$, then we have $\alpha=\otp(D^T(\alpha))$.
\end{corollary}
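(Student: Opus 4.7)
The plan is to combine two inequalities that together pin down $\alpha = \otp(D^T(\alpha))$. Writing $f(\alpha)=\otp(D^T(\alpha))$, Theorem~\ref{thm:normal-dil-fct} tells us that $f$ is a normal function on the ordinals. In particular $f$ is strictly increasing, and a routine transfinite induction (standard for any strictly increasing function on ordinals) then gives
\begin{equation*}
\alpha\leq f(\alpha)=\otp(D^T(\alpha)).
\end{equation*}
The successor step uses $f(\beta)<f(\beta+1)$ together with $\beta\leq f(\beta)$, and the limit step uses $\alpha=\sup_{\beta<\alpha}\beta\leq\sup_{\beta<\alpha}f(\beta)\leq f(\alpha)$ (which is actually an equality by continuity, but we only need the inequality).

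For the reverse inequality I would invoke the fixed point data: the embedding $\xi\colon D^T(\alpha)\to\alpha$ shows that $D^T(\alpha)$ is order-isomorphic to a subset of the ordinal~$\alpha$, so
\begin{equation*}
\otp(D^T(\alpha))\leq\otp(\alpha)=\alpha.
\end{equation*}
Combining this with the previous display yields $\otp(D^T(\alpha))=\alpha$.

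There is essentially no obstacle here: the statement is a direct extensional consequence of normality together with the definition of ``fixed point'' in Definition~\ref{def:fixed-point}. The only point to be mindful of is that the argument is carried out in a meta theory strong enough to have the ordinals as a totally ordered class and to perform the transfinite induction establishing $\alpha\leq f(\alpha)$ — which is exactly the setting the corollary is stated in.
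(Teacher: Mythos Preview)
Your argument is correct, and the reverse inequality $\otp(D^T(\alpha))\leq\alpha$ via the embedding $\xi$ is exactly what the paper does. For the forward inequality, however, the paper takes a shorter path: rather than invoking Theorem~\ref{thm:normal-dil-fct} and then running a transfinite induction to get $\alpha\leq f(\alpha)$ for the induced normal function~$f$, it simply observes that Proposition~\ref{prop:normal-dilator-extend} already provides an explicit embedding $D^{\mu^T}_\alpha:\alpha\rightarrow D^T(\alpha)$, which immediately yields $\alpha\leq\otp(D^T(\alpha))$. Your route works but is a detour, since the proof of Theorem~\ref{thm:normal-dil-fct} itself rests on Proposition~\ref{prop:normal-dilator-extend}; you are packaging the embedding into the statement ``$f$ is normal'' and then unpacking a consequence of it via induction, whereas the paper uses the embedding directly. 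The advantage of the paper's approach is brevity and avoiding any appeal to transfinite induction; the advantage of yours is that it makes the corollary a visible instance of the general principle that normal functions dominate the identity.
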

\begin{proof}
 Proposition~\ref{prop:normal-dilator-extend} tells us that $D^{\mu^T}_\alpha:\alpha\rightarrow D^T(\alpha)$ is an embedding. Hence we have $\alpha\leq\otp(D^T(\alpha))$. Conversely, the embedding $\xi$ witnesses $\alpha\geq\otp(D^T(\alpha))$.
\end{proof}

\section{From fixed point to induction}\label{sect:fp-to-ind}

In this section we deduce $\Pi^1_1$-induction along the natural numbers from the assumption that every normal dilator has a well-founded fixed point. To achieve this goal we will give a precise version of the informal argument from the introduction of the present paper.

As in the informal argument, the Kleene normal form theorem implies that a given $\Pi^1_1$-statement $\varphi\equiv\varphi(n)$ corresponds to a family
\begin{equation*}
\mathcal T=\{(n,\sigma)\,|\,\sigma\in\mathcal T_n\}
\end{equation*}
of trees $\mathcal T_n\subseteq\mathbb N^{<\omega}$, such that $\mathcal T_n$ is well-founded if, and only if, the instance $\varphi(n)$ holds. Here $X^{<\omega}$ denotes the tree of finite sequences with entries in $X$, ordered by end extension. Any subtree $\mathcal T\subseteq X^{<\omega}$ will be called an $X$-tree. When we speak of a family of trees we will assume that it is indexed by the natural numbers, unless indicated otherwise. Hence the above expresses that $\Pi^1_1$-statements with a distinguished number variable correspond to families of $\mathbb N$-trees.

If $X=(X,<_X)$ is a linear order, then any $X$-tree $\mathcal T$ is totally ordered by the Kleene-Brouwer order (also known as Lusin-Sierpi\'nski order) with respect to $X$. The latter compares $\sigma^i=\langle\sigma^i_0,\dots,\sigma^i_{k_i-1}\rangle\in\mathcal T$ according to the clause
\begin{equation*}
\sigma^1<_{\operatorname{KB}(X)}\sigma^2\quad\Leftrightarrow\quad
\begin{cases}
\text{either $\sigma^1$ is a proper end extension of $\sigma^2$,}\\
\text{or we have $\sigma^1_j<_X\sigma^2_j$ and $\forall_{i<j}\,\sigma^1_i=\sigma^2_i$ for some $j$.}
\end{cases}
\end{equation*}
We will omit the reference to $X$ when $X=\mathbb N$ carries the usual order. Recall that a function $f:\mathbb N\rightarrow X$ is called a branch of an $X$-tree $\mathcal T$ if we have
\begin{equation*}
f[n]=\langle f(0),\dots,f(n-1)\rangle\in\mathcal T
\end{equation*}
for every number $n$. Given an $X$-tree $\mathcal T$ for a well-order~$X$, it is equivalent to assert that $\mathcal T$ is well-founded with respect to end extensions, that $\mathcal T$ has no branch, and that the Kleene-Brouwer order with respect to $X$ is well-founded on $\mathcal T$. It is well-known that the equivalence can be proved in $\aca_0$ (cf.~\cite[Lemma~V.1.3]{simpson09}).

Using the terminology that we have introduced, the premise of $\Pi^1_1$-induction along the natural numbers can be expressed in the following form:

\begin{definition}[$\aca_0$]\label{def:progressive}
Consider a family $\mathcal T$ of $\mathbb N$-trees. If we have
\begin{equation*}
\text{``$\mathcal T_n$ is well-founded"}\rightarrow\text{``$\mathcal T_{n+1}$ is well-founded"},
\end{equation*}
then we say that $\mathcal T$ is progressive at $n$. The family $\mathcal T$ is called progressive if it is progressive at every $n\in\mathbb N$ and $\mathcal T_0$ is well-founded.
\end{definition}

Recall the function $h$ from the informal argument given in the introduction. In order to represent this function we will construct a family of prae-dilators $H[n]$ such that $X\mapsto D^{H[n]}(X)$ preserves well-foundedness if, and only if, a given family~$\mathcal T$ of $\mathbb N$-trees is progressive at~$n$. Unfortunately the orders $D^{H[n]}(X)$ that arise from Definition~\ref{def:coded-prae-dilator-reconstruct} are somewhat hard to understand. For this reason we will first give an ad hoc definition of orders~$H[n](X)$. In a second step we will define (coded) prae-dilators $H[n]$ with $D^{H[n]}(X)\cong H[n](X)$. The following approach is inspired by D.~Normann's proof that the notion of dilator is $\Pi^1_2$-complete (see~\cite[Annex~8.E]{girard-book-part2}; cf.~also the similar argument in~\cite[Section~3]{freund-rathjen_derivatives}): Assuming that $H[n](X)$ is ill-founded for some well-order~$X$, we must ensure that $\mathcal T_n$ is well-founded while $\mathcal T_{n+1}$ is not. The idea is to construct $H[n](X)$ as a tree. Along each branch one searches for an embedding of $\mathcal T_n$ into $X$ and, simultaneously, for a branch in $\mathcal T_{n+1}$. In order to make this precise we need one additional construction: Let us define
\begin{equation*}
X^\top=X\cup\{\top\}
\end{equation*}
as the extension of a given order $X$ by a new maximal element $\top$. If we map each embedding $f:X\rightarrow Y$ to the embedding
\begin{equation*}
f^\top:X^\top\rightarrow Y^\top,\qquad f^\top(\sigma)=\begin{cases}
f(\sigma) & \text{if $\sigma\in X$,}\\
\top        & \text{if $\sigma=\top$,}
\end{cases}
\end{equation*}
then we obtain an endofunctor of linear orders (and indeed a dilator). The fact that $\top$ is maximal will be relevant in some constructions further below, but in the following definition it is not: We simply need a default value for functions into $X^\top$ (cf.~the choice of elements $x_i$ in the proof of Proposition~\ref{prop:reconstruct-h}).

\begin{definition}[$\aca_0$]\label{def:reconstruct-h}
Consider a family $\mathcal T$ of $\mathbb N$-trees and a natural number~$n$. For each order $X$ we define $H[n](X)=H[\mathcal T,n](X)$ as the tree of all sequences
\begin{equation*}
\langle\langle x_0,s_0\rangle,\dots,\langle x_{k-1},s_{k-1}\rangle\rangle\in(X^\top\times\mathbb N)^{<\omega}
\end{equation*}
that satisfy the following conditions:
\begin{enumerate}[label=(\roman*)]
\item For any $i,j<k$ that code elements $i\lkb j$ of $\mathcal T_n$, we have $x_i<_{X^\top}x_j$.
\item We have $\langle s_0,\dots,s_{k-1}\rangle\in\mathcal T_{n+1}$.
\end{enumerate}
The tree $H[n](X)$ carries the Kleene-Brouwer order with respect to $X^\top\times\mathbb N$ (where $\langle x,s\rangle$ preceeds $\langle x',s'\rangle$ if we have either $x<_{X^\top}x'$ or $x=x'$ and $s<s'$ in $\mathbb N$).
\end{definition}

Let us verify the crucial property:

\begin{proposition}[$\aca_0$]\label{prop:reconstruct-h}
A family $\mathcal T$ of $\mathbb N$-trees is progressive at $n\in\mathbb N$ if, and only if, the order $H[\mathcal T,n](X)$ is well-founded for every well-order~$X$.
\end{proposition}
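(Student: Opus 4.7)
The plan is to handle both directions via the $\aca_0$-equivalence (recalled just before Definition~\ref{def:progressive}) between well-foundedness of the Kleene-Brouwer order on a tree over a well-order and non-existence of a branch through that tree. This equivalence is applied to $H[\mathcal T,n](X)$ viewed as a tree over the well-order $X^\top\times\mathbb N$ and, in the backward direction, to $\mathcal T_n$ viewed as a tree over $\mathbb N$.

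For the implication from progressivity to well-foundedness I would argue by contrapositive. Fix a well-order $X$ and extract from ill-foundedness of $H[\mathcal T,n](X)$ a branch $\langle\langle x_i,s_i\rangle\rangle_{i<\omega}$. Condition~(ii) of Definition~\ref{def:reconstruct-h} gives that $\langle s_i\rangle_{i<\omega}$ is a branch through $\mathcal T_{n+1}$, so $\mathcal T_{n+1}$ is ill-founded. Condition~(i) says that the map sending each natural number $i$ that codes an element of $\mathcal T_n$ to $x_i\in X^\top$ is strictly $\lkb$-increasing, hence an order embedding of $(\mathcal T_n,\lkb)$ into the well-order $X^\top$; consequently $\mathcal T_n$ is well-founded. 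These two facts together refute progressivity at~$n$.

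For the converse, assume $\mathcal T_n$ is well-founded and take $X$ to be the set of codes of elements of $\mathcal T_n$ equipped with $\lkb$; that $X$ is a well-order is exactly the relevant half of the $\aca_0$-equivalence above. By hypothesis $H[\mathcal T,n](X)$ is well-founded, and it suffices to deduce that $\mathcal T_{n+1}$ is well-founded. Given a hypothetical branch $\langle s_0,s_1,\dots\rangle$ of $\mathcal T_{n+1}$, set
\begin{equation*}
 x_i=\begin{cases}i&\text{if $i$ codes an element of $\mathcal T_n$,}\\ \top&\text{otherwise.}\end{cases}
\end{equation*}
Condition~(i) then holds because $i\lkb j$ in $\mathcal T_n$ forces $x_i=i<_X j=x_j$ by construction of $X$, and condition~(ii) is immediate, so $\langle\langle x_0,s_0\rangle,\langle x_1,s_1\rangle,\dots\rangle$ is a branch of $H[\mathcal T,n](X)$, yielding the desired contradiction.

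I do not expect a serious obstacle. The one subtle point is the role of the added top element~$\top$: on the progressivity-to-well-foundedness side it does no real work beyond ensuring that $X^\top$ is again a well-order, whereas on the converse side it is indispensable as a harmless default value at indices that do not code elements of $\mathcal T_n$, letting condition~(i) hold vacuously at those positions while leaving the KB-ordering of the genuine indices intact.
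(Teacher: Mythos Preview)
Your proposal is correct and follows essentially the same route as the paper: both directions are argued via branches, with the same choice $X=(\mathcal T_n,\lkb)$ and the same default assignment $x_i=\top$ for indices not coding elements of~$\mathcal T_n$. The only cosmetic difference is that the paper phrases the second direction as a pure contrapositive (assume $\mathcal T_n$ well-founded and $\mathcal T_{n+1}$ ill-founded, then exhibit an ill-founded $H[\mathcal T,n](X)$), whereas you nest the contradiction inside a direct proof of the implication; the underlying construction is identical.
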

\begin{proof}
To establish the contrapositive of the first direction we assume that $H[n](X)$ is ill-founded for some well-order~$X$. Since $X^\top\times\mathbb N$ is well-founded, the characteristic property of the Kleene-Brouwer order yields a branch $f:\mathbb N\rightarrow X^\top\times\mathbb N$ in the tree~$H[n](X)$. Writing $f(i)=\langle x_i,s_i\rangle$, it is straightforward to observe that
\begin{equation*}
\mathcal T_n\ni i\mapsto x_i\in X^\top
\end{equation*}
is an embedding into the well-order $X^\top$, while
\begin{equation*}
i\mapsto\langle s_0,\dots,s_{i-1}\rangle
\end{equation*}
is a branch in $T_{n+1}$. Hence $\mathcal T_n$ is well-founded while $\mathcal T_{n+1}$ is not, which means that $\mathcal T$ fails to be progressive at~$n$. Aiming at the contrapositive of the other direction, we assume that $\mathcal T_n$ is well-founded while $\mathcal T_{n+1}$ has a branch $i\mapsto s_i$. We set $X=\mathcal T_n$ (with the Kleene-Brouwer order) and define
\begin{equation*}
x_i=\begin{cases}
i      & \text{if $i\in X$},\\
\top & \text{otherwise}.
\end{cases}
\end{equation*}
It is straightforward to see that $i\mapsto\langle x_i,s_i\rangle$ is a branch in the tree $H[n](X)$, so that the latter is ill-founded, even though $X$ is a well-order.
\end{proof}

As explained above, the next task is to define a coded prae-dilator $H[n]$ such that $D^{H[n]}(X)\cong H[n](X)$ holds for any order~$X$. The values $H[n](m)$, which this prae-dilator assigns to the finite orders $m=\{0,\dots,m-1\}$, coincide with those from Definition~\ref{def:reconstruct-h}. It remains to define the action on morphisms, as well as the support functions. It will be useful to formulate the following definition for arbitrary orders, rather than just for finite orders represented by natural numbers.

\begin{definition}[$\aca_0$]
With each order embedding $f:X\rightarrow Y$ we associate a function $H[n](f):H[n](X)\rightarrow H[n](Y)$, defined by
\begin{equation*}
H[n](f)(\langle\langle x_0,s_0\rangle,\dots,\langle x_{k-1},s_{k-1}\rangle\rangle)=\langle\langle f^\top(x_0),s_0\rangle,\dots,\langle f^\top(x_{k-1}),s_{k-1}\rangle\rangle.
\end{equation*}
To define a family of functions $\supp^{H[n]}_X:H[n](X)\rightarrow[X]^{<\omega}$ we set
\begin{equation*}
\supp^{H[n]}_X(\langle\langle x_0,s_0\rangle,\dots,\langle x_{k-1},s_{k-1}\rangle\rangle)=\{x_i\,|\,i<k\text{ and }x_i\neq\top\}
\end{equation*}
for each order $X$.
\end{definition}

It is straightforward to check that the conditions from Definition~\ref{def:coded-prae-dilator} are satisfied:

\begin{lemma}[$\aca_0$]\label{lem:H-n-prae-dil}
By restricting the previous definitions to the category of natural numbers we obtain a coded prae-dilator $H[n]=H[\mathcal T,n]$, for any family $\mathcal T$ of $\mathbb N$-trees and any number~$n$.
\end{lemma}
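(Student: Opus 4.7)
The plan is to verify the two parts of Definition~\ref{def:coded-prae-dilator} directly, relying on the fact that the auxiliary construction $(\cdot)^\top$ acts functorially and satisfies $f^\top(x)=\top\Leftrightarrow x=\top$ for every embedding $f$ and every $x$ in the domain. Most of the checks can be carried out for arbitrary orders and then restricted to finite orders, which is precisely what the lemma asserts.

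First, I would establish functoriality of $H[n]$. Well-definedness of $H[n](f):H[n](X)\to H[n](Y)$ reduces to checking that clauses (i) and (ii) of Definition~\ref{def:reconstruct-h} are preserved: clause~(ii) is trivial because the $s_j$-components are copied unchanged, and clause~(i) follows because $f^\top$ is strictly increasing on $X^\top$. The same strict monotonicity, combined with the componentwise definition of $H[n](f)$, shows that $H[n](f)$ is an embedding for the Kleene-Brouwer order: a proper end extension is mapped to a proper end extension, while a lexicographic difference at coordinate $j$ is preserved by case distinction on whether the $x$-part or the $s$-part witnesses that difference. Preservation of identities and compositions follows from $(\id_X)^\top=\id_{X^\top}$ and $(g\circ f)^\top=g^\top\circ f^\top$. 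Standard coding of pairs and finite sequences places the field of $H[n](m)$ inside $\mathbb N$, as required by Definition~\ref{def:coded-prae-dilator}.

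Next I would check naturality of $\supp^{H[n]}$. For $\sigma=\langle\langle x_i,s_i\rangle\rangle_{i<k}\in H[n](X)$ and $f:X\to Y$, the image $H[n](f)(\sigma)=\langle\langle f^\top(x_i),s_i\rangle\rangle_{i<k}$ has support
\begin{equation*}
\{f^\top(x_i)\,|\,i<k\text{ and }f^\top(x_i)\neq\top\}=\{f(x_i)\,|\,i<k\text{ and }x_i\neq\top\}=[f]^{<\omega}(\supp^{H[n]}_X(\sigma)),
\end{equation*}
where the first equality uses that $f^\top(x_i)=\top$ is equivalent to $x_i=\top$.

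Finally, for the support condition in part~(ii) of Definition~\ref{def:coded-prae-dilator}, given $\sigma=\langle\langle x_i,s_i\rangle\rangle_{i<k}\in H[n](m)$ I set $a=\supp^{H[n]}_m(\sigma)\subseteq m$, so that $\iota_\sigma\circ\en_\sigma:|a|\to m$ is the unique order embedding with range $a$. I define $\tilde\sigma\in H[n](|a|)$ by replacing each $x_i\in a$ with $(\iota_\sigma\circ\en_\sigma)^{-1}(x_i)\in|a|$, replacing each $x_i=\top$ with $\top$, and keeping the $s_i$-components. Clauses~(i) and~(ii) of Definition~\ref{def:reconstruct-h} for $\tilde\sigma$ transfer from the corresponding clauses for $\sigma$, because $(\iota_\sigma\circ\en_\sigma)^{-1}$ is strictly increasing on $a$ and the $s$-part is untouched. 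Then $H[n](\iota_\sigma\circ\en_\sigma)(\tilde\sigma)=\sigma$ holds by construction. The only delicate point in the whole proof is the separate treatment of the element $\top$, which is handled uniformly by the equivalence $f^\top(x)=\top\Leftrightarrow x=\top$; beyond this the verification is essentially routine bookkeeping.
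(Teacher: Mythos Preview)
Your proposal is correct and follows exactly the approach implicit in the paper: the paper itself offers no proof beyond the remark that ``it is straightforward to check that the conditions from Definition~\ref{def:coded-prae-dilator} are satisfied,'' and your write-up supplies precisely those straightforward checks (functoriality via $(\cdot)^\top$, naturality of the support, and the support condition via the obvious preimage). There is nothing to compare.
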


The following justifies the ad hoc definition of the orders~$H[n](X)$.

\begin{lemma}[$\aca_0$]\label{lem:H-orders-dilator}
We have $D^{H[n]}(X)\cong H[n](X)$ for any order~$X$.
\end{lemma}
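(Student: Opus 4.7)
My plan is to exhibit the canonical map
\[\Phi_X : D^{H[n]}(X) \to H[n](X), \qquad \Phi_X(\langle a, \sigma\rangle) = H[n](\iota_a \circ \en_a)(\sigma),\]
where $\en_a : |a| \to a$ is the increasing enumeration and $\iota_a : a \hookrightarrow X$ is the inclusion, and then to check that it is an order isomorphism. Unfolding $H[n](\iota_a \circ \en_a)$ on a sequence $\sigma = \langle\langle y_0, s_0\rangle, \ldots, \langle y_{k-1}, s_{k-1}\rangle\rangle$, the map simply replaces each $y_i \in |a|^\top$ by $(\iota_a \circ \en_a)^\top(y_i) \in X^\top$ (sending $\top$ to $\top$), while leaving the $s_i$ intact. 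Because $(\iota_a \circ \en_a)^\top$ is strictly increasing on $|a|^\top$, condition (i) of Definition~\ref{def:reconstruct-h} is preserved; condition (ii) holds trivially. Hence $\Phi_X$ actually lands in $H[n](X)$.

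For the inverse, given $\tau = \langle\langle x_0, s_0\rangle, \ldots, \langle x_{k-1}, s_{k-1}\rangle\rangle \in H[n](X)$, I would set $a = \{x_i : x_i \neq \top\} \in [X]^{<\omega}$ and let $\sigma \in H[n](|a|)$ be the sequence obtained by replacing each $x_i \in a$ by its position in the increasing enumeration of $a$ (and leaving $\top$ as $\top$). Strict monotonicity of $\en_a^{-1}$ transfers condition (i) back, so $\sigma \in H[n](|a|)$, while the choice of $a$ forces $\supp^{H[n]}_{|a|}(\sigma) = |a|$, so that $\langle a, \sigma\rangle \in D^{H[n]}(X)$. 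This is where the support condition in Definition~\ref{def:coded-prae-dilator-reconstruct} plays its essential role: it rules out spurious pairs $\langle a', \sigma'\rangle$ with $a' \supsetneq a$ that $\Phi_X$ would collapse onto the same $\tau$. A short computation confirms that the two maps are mutually inverse.

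Order preservation is the final step. By Definition~\ref{def:coded-prae-dilator-reconstruct}, the inequality $\langle a, \sigma\rangle <_{D^{H[n]}(X)} \langle b, \tau\rangle$ is by definition $H[n](|\iota_a^{a \cup b}|)(\sigma) <_{H[n](|a \cup b|)} H[n](|\iota_b^{a \cup b}|)(\tau)$ in the Kleene-Brouwer order on $H[n](|a \cup b|)$. Using the factorisation $\iota_a \circ \en_a = \iota_{a \cup b} \circ \en_{a \cup b} \circ |\iota_a^{a \cup b}|$ (and similarly for $b$) together with functoriality, applying $H[n](\iota_{a \cup b} \circ \en_{a \cup b})$ sends both sides to $\Phi_X(\langle a, \sigma\rangle)$ and $\Phi_X(\langle b, \tau\rangle)$ respectively. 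Since this action simply applies $(\iota_{a \cup b} \circ \en_{a \cup b})^\top$ coordinate-wise and leaves the $s_i$ untouched, it preserves the Kleene-Brouwer order, which yields the required equivalence.

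The main obstacle is the bookkeeping involved in ensuring that the various enumerations, inclusions, and $\top$-extensions compose correctly. A more conceptual alternative is to observe that Definition~\ref{def:reconstruct-h}, together with the action on morphisms and the support data defined just before Lemma~\ref{lem:H-n-prae-dil}, makes $X \mapsto H[n](X)$ into a class-sized prae-dilator whose restriction to the category of natural numbers is precisely the coded prae-dilator of Lemma~\ref{lem:H-n-prae-dil}; the isomorphism $D^{H[n]}(X) \cong H[n](X)$ is then a direct instance of Proposition~2.5 of~\cite{freund-computable}.
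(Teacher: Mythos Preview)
Your proposal is correct and follows essentially the same approach as the paper: the isomorphism $\langle a,\sigma\rangle\mapsto H[n](\iota_a\circ\en_a)(\sigma)$ is exactly the one the paper uses, your inverse construction via $a=\{x_i:x_i\neq\top\}=\supp^{H[n]}_X(\tau)$ matches the paper's surjectivity argument, and both you and the paper point to \cite[Proposition~2.5]{freund-computable} as the general result of which this is an instance. The only difference is emphasis---the paper leads with the appeal to the general proposition and details surjectivity, whereas you spell out order preservation explicitly and mention the general result as an afterthought.
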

\begin{proof}
The claim is a special case of a general result about the connection between coded and class-sized prae-dilators, established in~\cite[Proposition~2.5]{freund-computable} (cf.~also Section~\ref{sect:representation} of the present paper). The desired isomorphism is given by
\begin{equation*}
D^{H[n]}(X)\ni\langle a,\sigma\rangle\mapsto H[n](\iota_a\circ\en_a)(\sigma)\in H[n](X),
\end{equation*}
where $\en_a:|a|\rightarrow a$ is the enumeration of $a$ and $\iota_a:a\hookrightarrow X$ is the inclusion. To show that the given map is order preserving (and hence injective) one argues exactly as in the proof of the general case (see~\cite{freund-computable} for details). It is instructive to look at the proof that the given map is surjective: Given an arbitrary element
\begin{equation*}
\tau=\langle\langle x_0,s_0\rangle,\dots,\langle x_{k-1},s_{k-1}\rangle\rangle\in H[n](X),
\end{equation*}
we set $a=\supp^{H[n]}_X(\tau)$. Since $\iota_a\circ\en_a:|a|\rightarrow X$ has range $a$ we can define
\begin{equation*}
m_i=\begin{cases}
\text{``the unique $m<|a|$ with $\iota_a\circ\en_a(m)=x_i$"} & \text{if $x_i\neq\top$},\\
\top & \text{otherwise}.
\end{cases}
\end{equation*}
It follows that we have $(\iota_a\circ\en_a)^\top(m_i)=x_i$ for all $i<k$. Since $(\iota_a\circ\en_a)^\top$ is order preserving we can conclude that
\begin{equation*}
\sigma=\langle\langle m_0,s_0\rangle,\dots,\langle m_{k-1},s_{k-1}\rangle\rangle
\end{equation*}
is an element of~$H[n](|a|)$. In view of
\begin{equation*}
[\iota_a\circ\en_a]^{<\omega}(\supp^{H[n]}_{|a|}(\sigma))=\{\iota_a\circ\en_a(m_i)\,|\,i<k\text{ and }m_i\neq\top\}=a
\end{equation*}
we have $\supp^{H[n]}_{|a|}(\sigma)=|a|$, which yields $\langle a,\sigma\rangle\in D^{H[n]}(X)$. By construction we have $\tau=H[n](\iota_a\circ\en_a)(\sigma)$, as required for surjectivity.
\end{proof}

In view of Definition~\ref{def:coded-dilator}, the previous considerations yield the following result, which completes the reconstruction of the function $h$ that appears in the informal argument from the introduction:

\begin{corollary}[$\aca_0$]
A family $\mathcal T$ of $\mathbb N$-trees is progressive at $n$ if, and only if, the prae-dilator $H[\mathcal T,n]$ is a dilator.
\end{corollary}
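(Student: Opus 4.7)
The plan is to observe that the corollary falls out by directly composing the three preceding results. By Definition~\ref{def:coded-dilator}, the coded prae-dilator $H[\mathcal T,n]$ supplied by Lemma~\ref{lem:H-n-prae-dil} is a dilator precisely when $D^{H[n]}(X)$ is well-founded for every well-order $X$ whose field is a subset of~$\mathbb N$. Lemma~\ref{lem:H-orders-dilator} provides an isomorphism $D^{H[n]}(X)\cong H[n](X)$, so this condition is equivalent to well-foundedness of $H[n](X)$ for every such $X$. It therefore suffices to connect the latter to progressivity, which is exactly the content of Proposition~\ref{prop:reconstruct-h}.

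For the direction from progressivity to being a dilator, if $\mathcal T$ is progressive at $n$ then Proposition~\ref{prop:reconstruct-h} gives well-foundedness of $H[n](X)$ for \emph{every} well-order $X$, so in particular for well-orders on subsets of~$\mathbb N$. Combined with the isomorphism from Lemma~\ref{lem:H-orders-dilator}, this verifies the criterion of Definition~\ref{def:coded-dilator}, making $H[\mathcal T,n]$ a dilator.

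For the converse, there is a small mismatch to handle: Proposition~\ref{prop:reconstruct-h} quantifies over all well-orders, while Definition~\ref{def:coded-dilator} only demands well-foundedness for fields contained in~$\mathbb N$. Fortunately the second half of the proof of Proposition~\ref{prop:reconstruct-h} already respects this restriction: when $\mathcal T$ fails to be progressive at $n$, the witness constructed there is $X=\mathcal T_n$ equipped with the Kleene-Brouwer order, whose field is a subset of~$\mathbb N$. Hence if $H[\mathcal T,n]$ is a dilator, then $D^{H[n]}(X)\cong H[n](X)$ is well-founded for this particular~$X$, which contradicts the witness and forces $\mathcal T$ to be progressive at~$n$. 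I anticipate no real obstacle; the only subtlety is this quantifier mismatch, which is handled simply by noting where the ill-foundedness witness in Proposition~\ref{prop:reconstruct-h} comes from.
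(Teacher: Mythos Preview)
Your proposal is correct and matches the paper's approach: the paper gives no separate proof, simply stating that the corollary follows ``in view of Definition~\ref{def:coded-dilator}'' from the preceding results, which is exactly the composition of Proposition~\ref{prop:reconstruct-h}, Lemma~\ref{lem:H-orders-dilator}, and Definition~\ref{def:coded-dilator} that you spell out. Your observation about the quantifier mismatch---that Definition~\ref{def:coded-dilator} restricts to well-orders with field contained in~$\mathbb N$, while Proposition~\ref{prop:reconstruct-h} speaks of all well-orders, and that this is harmless because the ill-foundedness witness $X=\mathcal T_n$ already has field in~$\mathbb N$---is a genuine subtlety that the paper leaves implicit, so your write-up is if anything more careful than the original.
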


To proceed we recall the functions $h_0(\gamma)=\sup_{n\in\mathbb N}h(n,\gamma)$ and
\begin{equation*}
f(\delta)=\otp(\mathcal T_0)+1+\sum_{\gamma<\delta}(h_0(\gamma)+1)
\end{equation*}
from the informal argument. Supremum and infinite sum can be implemented as dependent sums: Given an order $X$ and an $X$-indexed family of orders $Y_x$, the set
\begin{equation*}
\Sigma_{x\in X}Y_x=\{\langle x,y\rangle\,|\,x\in X\text{ and }y\in Y_x\}
\end{equation*}
is ordered according to the clause
\begin{equation*}
\langle x,y\rangle<_{\Sigma_{x\in X}Y_x}\langle x',y'\rangle\quad\Leftrightarrow\quad\begin{cases}
\text{either $x<_X x'$,}\\
\text{or $x=x'$ and $y<_{Y_x}y'$.}
\end{cases}
\end{equation*}
The elements of a binary sum $Y_0+Y_1=\Sigma_{i\in\{0,1\}}Y_i$ will be written as $\langle\bot,y_0\rangle$ and~$y_1$ rather than $\langle 0,y_0\rangle$ resp.~$\langle 1,y_1\rangle$ (intuitively, this means that we read the definition of $f$ as a single sum over $1+\delta$). If $F=(F,\mu^F)$ is a normal dilator that represents $f$, then the values of $\mu^F$ should correspond to the smallest elements of the summands~$h_0(\gamma)+1$. Since there does not appear to be a uniform way to choose these elements, we slightly deviate from the definition of $f$ and consider the summands $1+h_0(\gamma)+1$ instead. It will be convenient to represent the latter by a single dependent sum. For this purpose we consider the order
\begin{equation*}
 \npm=\{-1\}\cup\mathbb N\cup\{\infty\},
\end{equation*}
which extends the natural numbers by a new minimal and maximal element. We now define $H[-1]=H[\infty]$ as the constant dilator with values
\begin{equation*}
 H[-1](X)=H[\infty](X)=\{\star\},
\end{equation*}
for a new symbol $\star$ (note $H[-1](f)(\star)=\star$ and $\supp^{H[-1]}_X(\star)=\emptyset$). Assuming $\delta\cong X$ and $\gamma\cong X\!\restriction\!x$, the summand $1+h_0(\gamma)+1$ can then be represented (or rather bounded) by the dependent sum
\begin{equation*}
 \Sigma_{n\in\npm} H[n](X\!\restriction\!x),
\end{equation*}
which extends the sum $\Sigma_{n\in\mathbb N} H[n](X\!\restriction\!x)$ by a minimal element $\langle -1,\star\rangle$ and a maximal element $\langle\infty,\star\rangle$. Let us now define orders $F(X)$ that represent the values $f(\delta)$ of the function from the informal argument given in the introduction. We will later equip $F$ with the structure of a coded normal prae-dilator such that $D^F(X)\cong F(X)$ holds for any order~$X$.

\begin{definition}[$\aca_0$]
 For any family $\mathcal T$ of $\mathbb N$-trees and any order~$X$ we define
 \begin{equation*}
  F(X)=F[\mathcal T](X)=\mathcal T_0^\top+\Sigma_{x\in X}\Sigma_{n\in\npm} H[\mathcal T,n](X\!\restriction\!x),
 \end{equation*}
 with the usual order on a dependent sum.
\end{definition}

According to the above explanations, elements of $F(X)$ have the form $\langle\bot,\sigma\rangle$ with $\sigma\in\mathcal T_0\cup\{\top\}$ or $\langle x,\langle n,\sigma\rangle\rangle$ with $x\in X$, $n\in\npm$ and $\sigma\in H[n](X\!\restriction\!x)$. Elements of the second form will be written as $\langle x,n,\sigma\rangle$, with one pair of angle brackets omitted. At the beginning of this section we have expressed $\Pi^1_1$-induction along the natural numbers in terms of a family $\mathcal T$ of $\mathbb N$-trees. In this setting, the conclusion of induction amounts to the statement that all trees $\mathcal T_n$ are well-founded. The following result implies that this is the case if $F$ has a well-founded fixed point.

\begin{theorem}[$\aca_0$]\label{thm:fixed-point-embedding}
 Consider a family $\mathcal T$ of $\mathbb N$-trees. Given an order $X$ with an embedding $\xi:F[\mathcal T](X)\rightarrow X$, we can construct an embedding $J:\Sigma_{n\in\mathbb N}\mathcal T_n^\top\rightarrow X$.
\end{theorem}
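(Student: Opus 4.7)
The plan is to construct, by arithmetic recursion on $n$, a sequence of embeddings $J_n\colon\mathcal T_n^\top\to X$ whose top values $x_n:=J_n(\top)$ form a strictly increasing chain $x_0<_X x_1<_X\ldots$ in $X$. Granted such a family, the combined map $J(\langle n,\sigma\rangle)=J_n(\sigma)$ is automatically an embedding of the dependent sum: within each fibre $\{n\}\times\mathcal T_n^\top$ it is $J_n$ itself, while for $n<n'$ I expect to obtain $J(\langle n,\sigma\rangle)\leq x_n\leq x_{n'-1}<J(\langle n',\tau\rangle)$ when $\tau\in\mathcal T_{n'}$, and $J(\langle n,\sigma\rangle)\leq x_n<x_{n'}=J(\langle n',\top\rangle)$ when $\tau=\top$. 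At each stage I use exactly one slice of the fixed point, namely $\xi$ applied to elements of the form $\langle x_n,n,\hat\tau\rangle\in F[\mathcal T](X)$.

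In the base case set $J_0(\sigma)=\xi(\langle\bot,\sigma\rangle)$, so that $J_0$ is $\xi$ restricted to the summand $\mathcal T_0^\top$ of $F[\mathcal T](X)$, and let $x_0=J_0(\top)=\xi(\langle\bot,\top\rangle)$. For the inductive step, assume $J_n$ is an embedding with $J_n(\top)=x_n$, so that $J_n(\mathcal T_n)\subseteq X\!\restriction\!x_n$, and define
\begin{equation*}
 \tilde J_n\colon\mathbb N\to(X\!\restriction\!x_n)^\top,\qquad\tilde J_n(i)=\begin{cases}J_n(i)&\text{if }i\in\mathcal T_n,\\\top&\text{otherwise.}\end{cases}
\end{equation*}
For each $\tau=\langle s_0,\ldots,s_{k-1}\rangle\in\mathcal T_{n+1}$ put
\begin{equation*}
 \hat\tau=\langle\langle\tilde J_n(0),s_0\rangle,\ldots,\langle\tilde J_n(k-1),s_{k-1}\rangle\rangle,
\end{equation*}
which lies in $H[\mathcal T,n](X\!\restriction\!x_n)$ by Definition~\ref{def:reconstruct-h}: clause~(i) holds because $J_n$ preserves $\lkb$ on $\mathcal T_n$, and clause~(ii) holds because $\tau\in\mathcal T_{n+1}$. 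Then define $J_{n+1}(\tau)=\xi(\langle x_n,n,\hat\tau\rangle)$ for $\tau\in\mathcal T_{n+1}$ and $J_{n+1}(\top)=\xi(\langle x_n,\infty,\star\rangle)=:x_{n+1}$.

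The required verifications are routine case splits. A distinction on the two defining clauses of $\lkb$ shows that $\tau\mapsto\hat\tau$ is an embedding of $\mathcal T_{n+1}$ (Kleene-Brouwer order with respect to $\mathbb N$) into $H[\mathcal T,n](X\!\restriction\!x_n)$ (Kleene-Brouwer order with respect to $(X\!\restriction\!x_n)^\top\times\mathbb N$), and composing with $\xi$ gives an embedding $\mathcal T_{n+1}\to X$. Since $\langle x_n,n,\hat\tau\rangle<\langle x_n,\infty,\star\rangle$ in $F[\mathcal T](X)$ (because $n<\infty$ in $\npm$), we obtain $J_{n+1}(\tau)<x_{n+1}$ for all $\tau\in\mathcal T_{n+1}$, so $J_{n+1}$ embeds $\mathcal T_{n+1}^\top$ into~$X$. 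The inequality $x_n<x_{n+1}$ reduces to $\langle\bot,\top\rangle<\langle x_0,\infty,\star\rangle$ (first summand precedes second) in the base step, and to $\langle x_{n-1},\infty,\star\rangle<\langle x_n,\infty,\star\rangle$, a direct consequence of $x_{n-1}<x_n$, thereafter; the same kind of comparison yields $x_{n'-1}<J_{n'}(\tau)$ for $\tau\in\mathcal T_{n'}$, which supplies the missing inequality for $J$ on the dependent sum.

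I do not foresee a serious obstacle. The construction is arithmetic in $\xi$ and the previously built $J_n$, so the recursion is available in $\aca_0$. The one point that genuinely requires care is the bookkeeping of the parameter $x_n$: each $\hat\tau$ must be placed in $H[\mathcal T,n](X\!\restriction\!x_n)$ rather than merely in $H[\mathcal T,n](X)$, which is what gives the slot $\langle x_n,n,\hat\tau\rangle$ of $F[\mathcal T](X)$ the correct shape and allows $\langle x_n,\infty,\star\rangle$ to produce a fresh top value $x_{n+1}$ above it.
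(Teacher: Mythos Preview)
Your proposal is correct and takes essentially the same approach as the paper: both set $J(\langle 0,\sigma\rangle)=\xi(\langle\bot,\sigma\rangle)$, $J(\langle n+1,\top\rangle)=\xi(\langle x_n,\infty,\star\rangle)$, and $J(\langle n+1,\tau\rangle)=\xi(\langle x_n,n,\hat\tau\rangle)$ for the same~$\hat\tau$, and verify the same order comparisons. The only difference is presentational---the paper unfolds your recursion on~$n$ into a single course-of-values recursion on codes of pairs (making explicit which coding assumptions guarantee that the required earlier values are available), whereas you build $J_{n+1}$ from $J_n$; since the step $J_n\mapsto J_{n+1}$ is computable relative to $\xi$ and~$\mathcal T$, both formalizations go through in~$\aca_0$.
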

\begin{proof}
 The informal argument from the introduction would suggest to construct the branches $\mathcal T_n^\top\ni\sigma\mapsto J(\langle n,\sigma\rangle)$ by recursion on~$n$, but the required recursion principle is not available in our base theory. Remarkably, the reconstruction of the informal argument in terms of dilators is sufficiently finitistic to allow for a definition of $J$ by course-of-values recursion over the codes of pairs in $\Sigma_{n\in\mathbb N}\mathcal T_n^\top$. To ensure that the required values of $J$ are available in the recursion step we make two assumptions about the coding of pairs and sequences: Firstly, we assume that
 \begin{equation*}
  \langle n,\top\rangle<_{\mathbb N}\langle n+1,\sigma\rangle
 \end{equation*}
 holds for any $\sigma\in\mathcal T_{n+1}^\top$ (we write $<_{\mathbb N}$ to stress that the codes are compared with respect to the usual order on the natural numbers). If we agree to represent the symbol $\top$ by the number zero, then this is satisfied for the usual Cantor coding of pairs. Secondly, we assume that the code of a finite sequence bounds its length, so that we have
 \begin{equation*}
  \langle n,i\rangle<_{\mathbb N}\langle n+1,\langle s_0,\dots,s_{k-1}\rangle\rangle
 \end{equation*}
 for any element $\langle s_0,\dots,s_{k-1}\rangle\in\mathcal T_{n+1}$ and all $i<k$. The values
 \begin{equation*}
  J(\langle 0,\sigma\rangle)=\xi(\langle\bot,\sigma\rangle)
 \end{equation*}
 are defined without recursive calls. To specify the remaining values we abbreviate
 \begin{equation*}
  J_n(i)=\begin{cases}
         J(\langle n,i\rangle) & \text{if $i$ codes an element of $\mathcal T_n$,}\\
         \top & \text{otherwise.}
        \end{cases}
 \end{equation*}
 We can now complete the recursive definition of $J$ by setting
 \begin{align*}
  J(\langle n+1,\top\rangle)&=\xi(\langle J(\langle n,\top\rangle),\infty,\star\rangle),\\
  J(\langle n+1,\langle s_0,\dots,s_{k-1}\rangle\rangle)&=\xi(\langle J(\langle n,\top\rangle),n,\langle\langle J_n(0),s_0\rangle,\dots,\langle J_n(k-1),s_{k-1}\rangle\rangle\rangle).
 \end{align*}
 To show that this defines an embedding $J:\Sigma_{n\in\mathbb N}\mathcal T_n^\top\rightarrow X$ we verify the following properties by simultaneous induction on~$j$:
 \begin{enumerate}[label=(\roman*)]
  \item If $j$ codes an element of $\Sigma_{n\in\mathbb N}\mathcal T_n^\top$, then we have $J(j)\in X$.
  \item If $j_0,j_1<j$ code elements $j_0<_{\Sigma_{n\in\mathbb N}\mathcal T_n^\top}j_1$, then we have $J(j_0)<_XJ(j_1)$.
  \item If we have $j=\langle n,\sigma\rangle$ for some $\sigma\in\mathcal T_n$, then we get $J(j)<_XJ(\langle n,\top\rangle)$.
 \end{enumerate}
 Claim~(i) is most interesting for $j=\langle n+1,\langle s_0,\dots,s_{k-1}\rangle\rangle$, where the second component of the pair lies in $\mathcal T_{n+1}$. To show that $J(j)$ lies in $X$ it suffices to establish
 \begin{equation*}
  \langle\langle J_n(0),s_0\rangle,\dots,\langle J_n(k-1),s_{k-1}\rangle\rangle\in H[n](X\!\restriction\!J(\langle n,\top\rangle)).
 \end{equation*}
 In view of Definition~\ref{def:reconstruct-h} this requires $J_n(i)\in(X\!\restriction\!J(\langle n,\top\rangle))^\top$ for all $i<k$, which can be deduced from parts~(i) and~(iii) of the simultaneous induction hypothesis. We also need $J_n(i)<_X J_n(i')$ for all $i,i'<k$ that code elements $i\lkb i'$ of $\mathcal T_n$. This follows from the induction hypothesis for~(ii). To verify the induction step for~(ii) one needs to distinguish several cases. We simplify the notation by writing
 \begin{equation*}
  J(\langle n+1,\sigma\rangle)=\xi(\langle J(\langle n,\top\rangle),J^n(\sigma)\rangle)
 \end{equation*}
 for both $\sigma\in\mathcal T_{n+1}$ and $\sigma=\top$ (to be read as an implicit definition of $J^n(\sigma)$). In~the case of an inequality
 \begin{equation*}
  \langle 0,\sigma\rangle<_{\Sigma_{n\in\mathbb N}\mathcal T_n^\top}\langle n+1,\sigma'\rangle
 \end{equation*}
 we observe that $\langle\bot,\sigma\rangle$ and $\langle J(\langle n,\top\rangle),J^n(\sigma')\rangle$ lie in the left resp.~right summand of~$F(X)$. Since $\xi$ is order preserving we can infer
 \begin{equation*}
  J(\langle 0,\sigma\rangle)=\xi(\langle\bot,\sigma\rangle)<_X\xi(\langle J(\langle n,\top\rangle),J^n(\sigma')\rangle)=J(\langle n+1,\sigma'\rangle).
 \end{equation*}
 Let us now consider an inequality
 \begin{equation*}
  \langle n+1,\sigma\rangle<_{\Sigma_{n\in\mathbb N}\mathcal T_n^\top}\langle n'+1,\sigma'\rangle
 \end{equation*}
 with $n<n'$. The induction hypothesis yields $J(\langle n,\top\rangle)<_X J(\langle n',\top\rangle)$ and hence
 \begin{equation*}
  \langle J(\langle n,\top\rangle),J^n(\sigma)\rangle<_{F(X)}\langle J(\langle n',\top\rangle),J^{n'}(\sigma')\rangle.
 \end{equation*}
 To conclude we apply $\xi$ to both sides. Finally, we look at an inequality
 \begin{equation*}
  \langle n+1,\sigma\rangle<_{\Sigma_{n\in\mathbb N}\mathcal T_n^\top}\langle n+1,\sigma'\rangle
 \end{equation*}
 with $\sigma<_{\mathcal T_{n+1}^\top}\sigma'$. It is straightforward to see that $J^n(\sigma)$ preceeds $J^n(\sigma')$ in the order $\Sigma_{n\in\mathbb N^\infty_{-1}}H[n](X\!\restriction\!J(\langle n,\top\rangle))$, both for $\sigma'\in\mathcal T_{n+1}$ and for $\sigma'=\top$. We thus get
 \begin{equation*}
  J(\langle n+1,\sigma\rangle)=\xi(\langle J(\langle n,\top\rangle),J^n(\sigma)\rangle)<_X\xi(\langle J(\langle n,\top\rangle),J^n(\sigma')\rangle)=J(\langle n+1,\sigma'\rangle),
 \end{equation*}
 as desired. Claim~(iii) is straightforward once we know that the relevant values of $J$ lie in~$X$ (in case $n=m+1$ one uses the inequality $m<\infty$ in $\npm$).
\end{proof}

Our next goal is to show that $F=F[\mathcal T]$ can be extended into a (coded) normal prae-dilator such that we have $D^F(X)\cong F(X)$ for any order~$X$. To explain the definition  of $\mu^F$ we recall that $\langle -1,\star\rangle$ is the minimal element of $\Sigma_{n\in\npm}H[n](X)$, independently of the order~$X$.

\begin{definition}[$\aca_0$]
Given an order embedding $f:X\rightarrow Y$, we define a function $F(f):F(X)\rightarrow F(Y)$ by
\begin{align*}
F(f)(\langle\bot,\sigma\rangle)&=\langle\bot,\sigma\rangle,\\
F(f)(\langle x,n,\sigma\rangle)&=\langle f(x),n,H[n](f\!\restriction\!x)(\sigma)\rangle,
\end{align*}
where $f\!\restriction\!x:X\!\restriction\!x\rightarrow Y\!\restriction\!f(x)$ is the restriction of~$f$. In order to define a family of functions $\supp^F_X:F(X)\rightarrow[X]^{<\omega}$ we stipulate
\begin{align*}
\supp^F_X(\langle\bot,\sigma\rangle)&=\emptyset,\\
\supp^F_X(\langle x,n,\sigma\rangle)&=\{x\}\cup\supp^{H[n]}_{X\restriction x}(\sigma).
\end{align*}
Finally, we define functions $\mu^F_X:X\rightarrow F(X)$ by setting
\begin{equation*}
\mu^F_X(x)=\langle x,-1,\star\rangle
\end{equation*}
for each order~$X$ and each element $x\in X$.
\end{definition}

In order to apply Theorem~\ref{thm:fixed-point-embedding} we will invoke the principle that every normal dilator has a well-founded fixed point. For this purpose we need the following result:

\begin{proposition}[$\aca_0$]\label{prop:F-prae-dilator}
The restriction of the previous constructions to the category of natural numbers defines a coded normal prae-dilator $F=F[\mathcal T]$, for each family~$\mathcal T$ of $\mathbb N$-trees.
\end{proposition}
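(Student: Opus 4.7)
The plan is to unpack the five conditions needed for a coded normal prae-dilator (functoriality, linearity of the values, naturality and support-generation for $\supp^F$, and the two properties of $\mu^F$ from Definition~\ref{def:normal-dil}) and to dispatch each one by reducing it to the analogous property of the constituents $H[k]$, which are prae-dilators by Lemma~\ref{lem:H-n-prae-dil}, together with elementary calculations on dependent sums.

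First I would check that each $F(f):F(n)\to F(n')$ is an embedding and that $F$ is functorial. Both claims reduce to case analysis on the two summands of $F$: on $\mathcal T_0^\top$ the action is the identity, and on the right summand $\Sigma_{x\in n}\Sigma_{k\in\npm}H[k](n\!\restriction\!x)$ one uses that $f\!\restriction\!x:n\!\restriction\!x\to n'\!\restriction\!f(x)$ composes correctly, together with functoriality of each $H[k]$. Preservation of the order between the two summands is automatic, since left-summand elements are mapped to left-summand elements. Next I would verify the naturality square $[f]^{<\omega}\circ\supp^F_n=\supp^F_{n'}\circ F(f)$: for $\sigma=\langle\bot,\tau\rangle$ both sides are $\emptyset$, and for $\sigma=\langle x,k,\rho\rangle$ the identity reduces to the naturality of $\supp^{H[k]}$ applied to $f\!\restriction\!x$.

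For the support-generation clause from Definition~\ref{def:coded-prae-dilator}(ii), fix $\sigma\in F(n)$ and set $a=\supp^F_n(\sigma)$, $g=\iota_a\circ\en_a:|a|\to n$. If $\sigma=\langle\bot,\tau\rangle$ then $|a|=0$ and $\sigma$ is the image of itself under $F$ applied to the unique embedding $0\to n$. If $\sigma=\langle x,k,\rho\rangle$ let $m\in|a|$ be the unique index with $\en_a(m)=x$; writing $b=\supp^{H[k]}_{n\restriction x}(\rho)$, the restriction $g\!\restriction\!m$ is the unique increasing bijection $|a|\!\restriction\!m\to b$, which coincides with $\iota_b\circ\en_b$. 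Applying the support-generation property to the prae-dilator $H[k]$ supplies $\rho'\in H[k](|a|\!\restriction\!m)$ with $H[k](\iota_b\circ\en_b)(\rho')=\rho$, and then $\sigma'=\langle m,k,\rho'\rangle\in F(|a|)$ satisfies $F(g)(\sigma')=\sigma$ by direct computation.

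It remains to handle $\mu^F$. Naturality is immediate: $F(f)(\langle x,-1,\star\rangle)=\langle f(x),-1,H[-1](f\!\restriction\!x)(\star)\rangle=\langle f(x),-1,\star\rangle$, using that $H[-1]$ is constant with value $\star$. The main obstacle, and the one I would treat most carefully, is the characteristic equivalence
\[
\sigma\in F(n)\!\restriction\!\mu^F_n(m)\quad\Leftrightarrow\quad\supp^F_n(\sigma)\subseteq m
\]
for $m<n$. Writing $\mu^F_n(m)=\langle m,-1,\star\rangle$ and splitting cases: if $\sigma=\langle\bot,\tau\rangle$ the left summand always precedes the right, and the support is empty, so both sides hold. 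If $\sigma=\langle x,k,\rho\rangle$ with $x<_n m$, then $\sigma<_{F(n)}\mu^F_n(m)$, and the support $\{x\}\cup\supp^{H[k]}_{n\restriction x}(\rho)$ is contained in $n\!\restriction\!m\cup\{x\}\subseteq m$. If $x=m$, then $\langle k,\rho\rangle\geq\langle -1,\star\rangle$ lexicographically with equality only for $\sigma=\mu^F_n(m)$, so $\sigma\not<\mu^F_n(m)$; on the other hand $m\in\supp^F_n(\sigma)$, so $\supp^F_n(\sigma)\not\subseteq m$. If $x>_n m$, then $\sigma>\mu^F_n(m)$ and $x\in\supp^F_n(\sigma)\setminus m$. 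In every case the two sides of the biconditional agree, completing the verification.
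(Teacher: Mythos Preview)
Your proposal is correct and follows essentially the same route as the paper: functoriality and naturality are declared straightforward, the support-generation clause is verified by peeling off the top element $x$ of $\supp^F$ and invoking the support condition for $H[k]$ on the remainder, and the normality equivalence is reduced to the observation that $\langle -1,\star\rangle$ is minimal in the inner sum so that both sides of the biconditional amount to comparing the outer index with $m$. The only cosmetic difference is that you split the normality check into the three subcases $x<m$, $x=m$, $x>m$, whereas the paper collapses these into the single equivalence $\tau<_{F(m)}\mu^F_m(k)\Leftrightarrow m_0<k$; and in the support-generation step you leave implicit that $x$ is the maximum of $a$ (hence $m=|a|-1$), which the paper states explicitly.
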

\begin{proof}
It is straightforward to check that $F$ is a functor and that $\supp^F$ is a natural transformation, invoking Lemma~\ref{lem:H-n-prae-dil} for the corresponding properties of $H[n]$. To verify the support condition from part~(ii) of Definition~\ref{def:coded-prae-dilator} we consider an element
\begin{equation*}
\tau=\langle m_0,n,\sigma\rangle\in F(m)
\end{equation*}
with $m_0<m$ and $\sigma\in H[n](m_0)$ (observe $m\!\restriction\!m_0=m_0$). In view of
\begin{equation*}
\supp^F_m(\tau)=\{m_0\}\cup\supp^{H[n]}_{m_0}(\sigma)
\end{equation*}
we write $|\supp^F_m(\tau)|=k+1$. Let $\iota_\tau\circ\en_\tau:k+1\rightarrow m$ denote the embedding with range $\supp^F_m(\tau)$, as in Definition~\ref{def:coded-prae-dilator}. Due to $\supp^{H[n]}_{m_0}(\sigma)\in[m_0]^{<\omega}$ we see that
\begin{equation*}
(\iota_\tau\circ\en_\tau)\!\restriction\!k:k\rightarrow\iota_\tau\circ\en_\tau(k)=m_0
\end{equation*}
has range $\supp^{H[n]}_{m_0}(\sigma)$. Hence the support condition for $H[n]$ yields
\begin{equation*}
 \sigma=H[n]((\iota_\tau\circ\en_\tau)\!\restriction\!k)(\sigma_0)
\end{equation*}
for some element $\sigma_0\in H[n](k)$ (in case $n\in\{-1,\infty\}$ we have $\sigma=\star=\sigma_0$). Setting
\begin{equation*}
 \tau_0=\langle k,n,\sigma_0\rangle\in F(k+1)
\end{equation*}
we get $\tau=F(\iota_\tau\circ\en_\tau)(\tau_0)$, as required by the support condition for~$F$. To prove that $F=(F,\mu^F)$ is normal we must establish
\begin{equation*}
 \tau<_{F(m)}\mu^F_m(k)\quad\Leftrightarrow\quad\supp^F_m(\tau)\subseteq\{0,\dots,k-1\},
\end{equation*}
for arbitrary numbers $k<m$ and any element $\tau\in F(m)$. Let us first assume that we are concerned with an element of the form $\tau=\langle\bot,\sigma\rangle$. In this case the left side of the equivalence is satisfied, since $\tau$ lies in the first summand of $F(m)$ while $\mu^F_m(k)=(k,-1,\star)$ lies in the second. The right side of the equivalence holds because of $\supp^F_m(\tau)=\emptyset$. Now consider an element
\begin{equation*}
 \tau=\langle m_0,n,\sigma\rangle.
\end{equation*}
Since $\langle -1,\star\rangle$ is the smallest element of $\Sigma_{n\in\npm}H[n](k)$ we have
\begin{equation*}
 \tau<_{F(m)}\mu^F_m(k)\quad\Leftrightarrow\quad m_0<k.
\end{equation*}
In view of $\supp^{H[n]}_{m_0}(\sigma)\subseteq\{0,\dots,m_0-1\}$ we also have
\begin{equation*}
 \supp^F_m(\tau)\subseteq\{0,\dots,k-1\}\quad\Leftrightarrow\quad m_0<k,
\end{equation*}
which completes the proof of the required equivalence.
\end{proof}

Let us also connect the orders $F(X)$ to the coded prae-dilator $F$:

\begin{lemma}[$\aca_0$]\label{lem:coded-class-F}
 We have $F(X)\cong D^{F}(X)$ for any order~$X$.
\end{lemma}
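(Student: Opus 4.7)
The plan is to follow the template already used in Lemma~\ref{lem:H-orders-dilator}, constructing the isomorphism explicitly via
\[
 D^F(X)\ni\langle a,\sigma\rangle\mapsto F(\iota_a\circ\en_a)(\sigma)\in F(X).
\]
Order preservation (hence injectivity) is once again a direct instance of the general connection between coded and class-sized prae-dilators established in \cite[Proposition~2.5]{freund-computable}, so it may be cited rather than re-proved.

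The content of the argument therefore lies in surjectivity, which I would establish by splitting on the two summands of $F(X)$. If $\tau=\langle\bot,\sigma\rangle$ with $\sigma\in\mathcal T_0^\top$, then $\supp^F_X(\tau)=\emptyset$, so we set $a=\emptyset$ and take $\sigma_0=\langle\bot,\sigma\rangle\in F(0)$; the support condition $\supp^F_0(\sigma_0)=0$ holds vacuously, and the definition of $F$ on embeddings returns $\tau$. If instead $\tau=\langle x,n,\sigma\rangle$ with $\sigma\in H[n](X\!\restriction\!x)$, then $a=\supp^F_X(\tau)=\{x\}\cup\supp^{H[n]}_{X\restriction x}(\sigma)$ has $x$ as its maximum (since the second summand sits below $x$). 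Writing $|a|=k+1$, the enumeration $\en_a$ sends $k$ to $x$, while $(\iota_a\circ\en_a)\!\restriction\!k:k\rightarrow X\!\restriction\!x$ has range $\supp^{H[n]}_{X\restriction x}(\sigma)$.

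The support condition for the coded prae-dilator $H[n]$ (Lemma~\ref{lem:H-n-prae-dil}) now yields an element $\sigma_0\in H[n](k)$ with $\supp^{H[n]}_k(\sigma_0)=k$ and $H[n]((\iota_a\circ\en_a)\!\restriction\!k)(\sigma_0)=\sigma$; in the degenerate cases $n\in\{-1,\infty\}$ this is automatic with $\sigma=\star=\sigma_0$. Setting $\sigma_1=\langle k,n,\sigma_0\rangle\in F(k+1)$, one checks from the definitions of $F$ and $\supp^F$ that $\supp^F_{k+1}(\sigma_1)=\{k\}\cup\supp^{H[n]}_k(\sigma_0)=k+1$, so $\langle a,\sigma_1\rangle\in D^F(X)$; and $F(\iota_a\circ\en_a)(\sigma_1)=\langle\en_a(k),n,H[n]((\iota_a\circ\en_a)\!\restriction\!k)(\sigma_0)\rangle=\tau$, as required.

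The main obstacle is purely bookkeeping: matching the decomposition of $a$ into $\{x\}$ and $\supp^{H[n]}_{X\restriction x}(\sigma)$ with the corresponding splitting of $\iota_a\circ\en_a$ into its top value and its restriction to $k$, and checking that $F$ applied to this restriction calls $H[n]$ on exactly the embedding furnished by the support condition. No genuine new idea beyond the one already exploited in Lemma~\ref{lem:H-orders-dilator} is needed; the only subtlety specific to $F$ is that the second coordinate of $\tau$ itself carries an internal support which must be handled through the corresponding property of $H[n]$.
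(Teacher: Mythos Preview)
Your proposal is correct and follows what the paper identifies as the primary route: instantiating \cite[Proposition~2.5]{freund-computable} for the canonical map $\langle a,\sigma\rangle\mapsto F(\iota_a\circ\en_a)(\sigma)$, with surjectivity spelled out by hand (just as was done for $H[n]$ in Lemma~\ref{lem:H-orders-dilator}). The paper, however, writes out a slightly different alternative: it first invokes Lemma~\ref{lem:H-orders-dilator} to replace each $H[n](X\!\restriction\!x)$ by $D^{H[n]}(X\!\restriction\!x)$, and then constructs an explicit isomorphism
\[
\mathcal T_0^\top+\Sigma_{x\in X}\Sigma_{n\in\npm}D^{H[n]}(X\!\restriction\!x)\;\cong\;D^F(X),\qquad
\langle x,n,\langle a,\sigma\rangle\rangle\mapsto\langle\{x\}\cup a,\langle|a|,n,\sigma\rangle\rangle.
\]
Your direct approach and the paper's alternative unwind to the same calculation; yours saves the detour through $D^{H[n]}$, while the paper's version makes the reduction to the already-proved Lemma~\ref{lem:H-orders-dilator} more visible.

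One small correction: when you invoke ``the support condition for the coded prae-dilator $H[n]$ (Lemma~\ref{lem:H-n-prae-dil})'' to produce $\sigma_0$, note that $\sigma$ lives in $H[n](X\!\restriction\!x)$ for an arbitrary order~$X$, not in some $H[n](m)$. The coded support condition of Definition~\ref{def:coded-prae-dilator} applies only on the category of natural numbers; what you actually need is the class-sized support condition (which holds because $H[n]$ and $\supp^{H[n]}$ were defined for all orders), or equivalently the surjectivity established in Lemma~\ref{lem:H-orders-dilator}. The argument is fine once the citation is adjusted.
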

\begin{proof}
Yet again, this is an instance of the general result from~\cite[Proposition~2.5]{freund-computable}. As an alternative to the general result, the claim can be deduced from Lemma~\ref{lem:H-orders-dilator}: In view of that result (which readily extends to $n\in\{-1,\infty\}$) it suffices to show
\begin{equation*}
 \mathcal T^\top_0+\Sigma_{x\in X}\Sigma_{n\in\npm}D^{H[n]}(X\!\restriction\!x)\cong D^F(X).
\end{equation*}
Every element of the second summand on the left side has the form $\langle x,n,\langle a,\sigma\rangle\rangle$, for a finite subset $a\subseteq X\!\restriction\!x$ and an element $\sigma\in H[n](|a|)$ with $\supp^{H[n]}_{|a|}(\sigma)=|a|$. The desired isomorphism can now be specified by stipulating
\begin{align*}
 \langle\bot,\sigma\rangle&\mapsto\langle\emptyset,\langle\bot,\sigma\rangle\rangle,\\
 \langle x,n,\langle a,\sigma\rangle\rangle&\mapsto\langle\{x\}\cup a,\langle |a|,n,\sigma\rangle\rangle.
\end{align*}
To see that the values lie in $D^F(X)$ one observes $\langle |a|,n,\sigma\rangle\in F(|\{x\}\cup a|)$ and
\begin{equation*}
 \supp^F_{|\{x\}\cup a|}(\langle |a|,n,\sigma\rangle)=\{|a|\}\cup\supp^{H[n]}_{|a|}(\sigma)=|a|+1=|\{x\}\cup a|.
\end{equation*}
The fact that the given map is order preserving (and hence injective) is verified as in the proof of~\cite[Lemma~3.10]{freund-rathjen_derivatives}. To establish surjectivity we consider an element
\begin{equation*}
 \langle b,\langle m,n,\sigma\rangle\rangle\in D^F(X).
\end{equation*}
In view of Definition~\ref{def:coded-prae-dilator-reconstruct} we have $\langle m,n,\sigma\rangle\in F(|b|)$, which yields $\sigma\in H[n](m)$, and
\begin{equation*}
 |b|=\supp^F_{|b|}(\langle m,n,\sigma\rangle)=\{m\}\cup\supp^{H[n]}_m(\sigma).
\end{equation*}
Let $x$ be the largest element of $b$ and set $a=b\backslash\{x\}$. It is straightforward to conclude $m=|a|$ and $\supp^{H[n]}_m(\sigma)=\{0,\dots,|a|-1\}$, which yields $\langle a,\sigma\rangle\in D^{H[n]}(X\!\restriction\!x)$. Hence $\langle b,\langle m,n,\sigma\rangle\rangle$ arises as the image of $\langle x,n,\langle a,\sigma\rangle\rangle$, as needed for surjectivity.
\end{proof}

We can draw the following conclusion:

\begin{proposition}[$\aca_0$]\label{prop:F-prog-dilator}
 A family $\mathcal T$ of $\mathbb N$-trees is progressive if, and only if, the normal prae-dilator $F[\mathcal T]$ is a normal dilator.
\end{proposition}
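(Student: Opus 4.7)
The plan is to reduce the statement to the well-foundedness of $F(X)$ via Lemma~\ref{lem:coded-class-F}: since $D^F(X)\cong F(X)$, the prae-dilator $F[\mathcal T]$ is a normal dilator precisely when $F(X)$ is well-founded for every well-order~$X$. Both directions will then follow by decomposing $F(X)$ as the ordered sum
\begin{equation*}
\mathcal T_0^\top + \Sigma_{x\in X}\Sigma_{n\in\npm}H[n](X\!\restriction\!x)
\end{equation*}
and invoking the corollary immediately following Lemma~\ref{lem:H-orders-dilator}, which identifies well-foundedness of the $H[n](Y)$ (for all well-orders~$Y$) with progressivity at~$n$.

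For the forward direction, I would assume that $\mathcal T$ is progressive and check that every ingredient of $F(X)$ is well-founded. Since $\mathcal T_0$ is well-founded as a tree, the Kleene--Brouwer order makes it a well-order, and hence $\mathcal T_0^\top$ is a well-order. For the second summand, both $X$ and $\npm$ are well-orders, and each fibre $H[n](X\!\restriction\!x)$ is well-founded: for $n\in\{-1,\infty\}$ it is the singleton $\{\star\}$; for $n\in\mathbb N$ progressivity at~$n$ together with the aforementioned corollary guarantees that $H[n](Y)$ is well-founded for every well-order~$Y$, in particular for $Y=X\!\restriction\!x$. The standard fact, provable in $\aca_0$, that a $\Sigma$-sum over a well-order with well-founded fibres is itself well-founded then delivers well-foundedness of $F(X)$.

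For the converse, I would assume that $F(X)$ is well-founded for every well-order~$X$. Well-foundedness of $\mathcal T_0^\top$, and hence of $\mathcal T_0$, follows by observing that $\mathcal T_0^\top$ is the first summand of $F(X)$ and therefore a sub-order of a well-order. To establish progressivity at a given $n\in\mathbb N$, the corollary reduces matters to showing that $H[n](Y)$ is well-founded for every well-order~$Y$. Given such a $Y$, I would set $X=Y\cup\{y^*\}$ with a new maximum $y^*$; then $X$ is a well-order with $X\!\restriction\!y^*=Y$, and the map $\sigma\mapsto\langle y^*,n,\sigma\rangle$ exhibits $H[n](Y)$ as a sub-order of $F(X)$, whose well-foundedness is inherited.

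The main obstacle is a bookkeeping one: one must invoke the $\aca_0$-provable preservation of well-foundedness under $\Sigma$-sums (by eventual stabilisation of the projection to the index order, via arithmetical comprehension), and in the converse one must choose the auxiliary $X$ so that the relevant summand of $F(X)$ is literally order-isomorphic to $\mathcal T_0^\top$ respectively $H[n](Y)$. Once these points are in hand, the argument is a direct unpacking of Lemma~\ref{lem:coded-class-F} and of the corollary characterising progressivity at~$n$.
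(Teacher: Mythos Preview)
Your proposal is correct and follows essentially the same route as the paper's proof: decompose $F(X)$ via Lemma~\ref{lem:coded-class-F}, use the characterisation of progressivity at $n$ in terms of $H[n]$ preserving well-foundedness, and in the converse embed $H[n](Y)$ into $F(Y^\top)$ via $\sigma\mapsto\langle\top,n,\sigma\rangle$. The only cosmetic differences are that the paper invokes Proposition~\ref{prop:reconstruct-h} directly rather than the ensuing corollary, obtains well-foundedness of $\mathcal T_0$ from its embedding into $F(\emptyset)$, and spells out the $\Sigma$-sum stabilisation argument explicitly instead of quoting it as a standard fact.
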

\begin{proof}
 For the first direction we assume that $\mathcal T$ is progressive. According to Definition~\ref{def:progressive} this means that $\mathcal T_0$ is well-founded and that $\mathcal T_n$ is progressive at every~$n\in\mathbb N$. The latter implies that the maps $X\mapsto H[n](X)$ preserve well-foundedness, due to Proposition~\ref{prop:reconstruct-h}. We must show that $D^F(X)\cong F(X)$ is well-founded for any given well-order~$X$. Aiming at a contradiction, assume there is a descending sequence in
 \begin{equation*}
  F(X)=\mathcal T_0^\top+\Sigma_{x\in X}\Sigma_{n\in\npm} H[n](X\!\restriction\!x).
 \end{equation*}
 As $\mathcal T_0^\top$ is well-founded this sequence must stay within the second summand, so that we can write it as $k\mapsto\langle x_k,n_k,\sigma_k\rangle$. Since $X$ and $\npm$ are both well-founded, there must be values $x\in X$ and $n\in\npm$ such that we have $x_k=x$ and $n_k=n$ for all indices $k$ above some bound $K\in\mathbb N$. It follows that $K\leq k\mapsto\sigma_k$ is a descending sequence in $H[n](X\!\restriction\!x)$, contradicting the well-foundedness of that order (note that $H[-1](X\!\restriction\!x)=\{\star\}=H[\infty](X\!\restriction\!x)$ is well-founded in any case). For the other direction we assume that $F(X)\cong D^F(X)$ is well-founded for any well-order~$X$. We immediately learn that $\mathcal T_0$, which can be embedded into $F(\emptyset)$, is well-founded. In view of Proposition~\ref{prop:reconstruct-h} it remains to show that $X\mapsto H[n](X)$ preserves well-foundedness for any number $n$. Given a well-order~$X$, we observe that $X^\top=X\cup\{\top\}$ is a well-order that contains $X=X^\top\!\restriction\!\top$. The embedding
 \begin{equation*}
  H[n](X)\ni\sigma\mapsto\langle\top,n,\sigma\rangle\in F(X^\top)
 \end{equation*}
 witnesses the well-foundedness of $H[n](X)$.
\end{proof}

Putting things together, we can deduce the first direction of our main result:

\begin{theorem}[$\aca_0$]\label{thm:fixed-to-ind}
Assume that every normal dilator has a well-founded fixed point. Then (each instance of) $\Pi^1_1$-induction along the natural numbers holds.
\end{theorem}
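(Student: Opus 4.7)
The plan is to chain together the machinery already built in this section. Given an instance of $\Pi^1_1$-induction for a formula $\varphi(n)$, the Kleene normal form theorem supplies a family $\mathcal{T}$ of $\mathbb{N}$-trees with $\varphi(n)\leftrightarrow\text{``$\mathcal{T}_n$ is well-founded''}$. The premises of the induction principle then say precisely that $\mathcal{T}$ is progressive in the sense of Definition~\ref{def:progressive}, while the desired conclusion $\forall n\,\varphi(n)$ amounts to the assertion that every $\mathcal{T}_n$ is well-founded.

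From such a progressive $\mathcal{T}$ I would form the normal prae-dilator $F=F[\mathcal{T}]$ of Proposition~\ref{prop:F-prae-dilator}. Since $\mathcal{T}$ is progressive, Proposition~\ref{prop:F-prog-dilator} upgrades $F$ to a genuine normal dilator. The standing hypothesis of the theorem then provides a well-founded order $X$ together with an embedding $\xi:D^{F}(X)\rightarrow X$. Composing $\xi$ with the isomorphism $F(X)\cong D^{F}(X)$ from Lemma~\ref{lem:coded-class-F} yields an embedding $F[\mathcal{T}](X)\rightarrow X$ of the exact shape consumed by Theorem~\ref{thm:fixed-point-embedding}.

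Applying Theorem~\ref{thm:fixed-point-embedding} now produces an embedding $J:\Sigma_{n\in\mathbb{N}}\mathcal{T}_n^\top\rightarrow X$. For each fixed $n$, the canonical map $\mathcal{T}_n\hookrightarrow\Sigma_{n\in\mathbb{N}}\mathcal{T}_n^\top$ sending $\sigma\mapsto\langle n,\sigma\rangle$ composes with $J$ to embed $\mathcal{T}_n$, with its Kleene--Brouwer order, into the well-order $X$. Hence every $\mathcal{T}_n$ is well-founded, and the equivalence between well-foundedness of an $\mathbb{N}$-tree with respect to end extensions and with respect to Kleene--Brouwer (available in $\aca_0$) yields $\forall n\,\varphi(n)$. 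I do not anticipate any serious obstacle, since the real content has been absorbed into Proposition~\ref{prop:F-prog-dilator} and Theorem~\ref{thm:fixed-point-embedding}; the one small gluing point to watch is that Definition~\ref{def:fixed-point} hands us an embedding $D^{F}(X)\rightarrow X$ while Theorem~\ref{thm:fixed-point-embedding} requires one of the form $F[\mathcal{T}](X)\rightarrow X$, and Lemma~\ref{lem:coded-class-F} is exactly what reconciles the two presentations.
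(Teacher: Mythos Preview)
Your proposal is correct and follows essentially the same route as the paper's proof: Kleene normal form to obtain the family $\mathcal T$, Proposition~\ref{prop:F-prae-dilator} and Proposition~\ref{prop:F-prog-dilator} to produce the normal dilator $F[\mathcal T]$, the fixed-point hypothesis together with Lemma~\ref{lem:coded-class-F} to get an embedding $F[\mathcal T](X)\rightarrow X$, and then Theorem~\ref{thm:fixed-point-embedding} to conclude that each $\mathcal T_n$ embeds into the well-order~$X$. You have also correctly flagged the one bookkeeping point, namely that Lemma~\ref{lem:coded-class-F} is needed to pass from $D^{F}(X)$ to $F[\mathcal T](X)$.
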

\begin{proof}
 Given a $\Pi^1_1$-formula $\varphi(n)$ (possibly with parameters), we can use the Kleene normal form theorem (see~\cite[Lemma~V.1.4]{simpson09}) to find a $\Delta^0_0$-formula $\theta(\sigma,n)$ with
 \begin{equation*}
  \varphi(n)\leftrightarrow\forall_f\exists_m\theta(f[m],n),
 \end{equation*}
 where the universal quantifier ranges over functions $f:\mathbb N\rightarrow\mathbb N$. Now define a family $\mathcal T$ of $\mathbb N$-trees by stipulating
 \begin{equation*}
  \langle s_0,\dots,s_{k-1}\rangle\in\mathcal T_n\,\leftrightarrow\,\text{``we have $\neg\theta(\langle s_0,\dots,s_{i-1}\rangle,n)$ for all $i\leq k$''}.
 \end{equation*}
 Hence $\neg\varphi(n)$ is equivalent to the statement that $\mathcal T_n$ has a branch. If we equip $\mathcal T_n$ with the Kleene-Brouwer order, then we obtain
 \begin{equation*}
  \varphi(n)\leftrightarrow\text{``$\mathcal T_n$ is well-founded''}.
 \end{equation*}
 According to Proposition~\ref{prop:F-prae-dilator} the family $\mathcal T$ gives rise to a normal prae-dilator~$F[\mathcal T]$. To establish the induction principle we assume $\varphi(0)$ and $\forall_n(\varphi(n)\rightarrow\varphi(n+1))$. In view of Definition~\ref{def:progressive} these assumptions mean that $\mathcal T$ is progressive. We can then use Proposition~\ref{prop:F-prog-dilator} to infer that $\mathcal F[\mathcal T]$ is a normal dilator. Invoking the assumption of the present theorem we obtain a well-order~$X$ and an embedding
 \begin{equation*}
  \xi:F[\mathcal T](X)\cong D^{F[\mathcal T]}(X)\rightarrow X,
 \end{equation*}
 where the isomorphism comes from Lemma~\ref{lem:coded-class-F}. Now Theorem~\ref{thm:fixed-point-embedding} tells us that the dependent sum $\Sigma_{n\in\mathbb N}\mathcal T_n^\top$ can be embedded into~$X$. Since the latter is a well-order this ensures that all trees $\mathcal T_n$ are well-founded. We thus obtain $\forall_n\varphi(n)$, which is the conclusion of the desired induction principle.
\end{proof}

\section{From induction to well-founded fixed point}\label{sect:ind-to-wf}

In the first part of this section we describe a relativized notation system for the initial fixed point $\fix(T)$ of a given normal prae-dilator~$T$, working in $\rca_0$. Theorem~\ref{thm:fixed-to-ind} implies that $\rca_0$ cannot prove the principle that $\fix(T)$ is well-founded whenever~$T$ is a dilator. In the second part of the present section we show that this principle follows from $\Pi^1_1$-induction along the natural numbers. This reversal of Theorem~\ref{thm:fixed-to-ind} completes the proof of our main result.

Section~4 of~\cite{freund-rathjen_derivatives} contains a construction of the derivative $X\mapsto\partial T_X$ of a given normal prae-dilator~$T$. Our order $\fix(T)$ will coincide with the order $\partial T_0$ that arises from this construction. Since the definition of the full derivative $\partial T$ is considerably more involved than the definition of the single fixed point~$\fix(T)$, we think that it is nevertheless worthwhile to give an independent construction of the latter.

To motivate our construction we assume that we already have an order $\fix(T)$ that admits an embedding $\xi:D^T(\fix(T))\rightarrow\fix(T)$. According to Definition~\ref{def:coded-prae-dilator-reconstruct} the set $D^T(\fix(T))$ consists of pairs $\langle a,\sigma\rangle$ of a finite set $a\subseteq\fix(T)$ and an element $\sigma\in T(|a|)$ with $\supp^T_{|a|}(\sigma)=|a|$. The idea is that the value $\xi(\langle a,\sigma\rangle)\in\fix(T)$ can be represented by a term $\xi\langle a,\sigma\rangle$. This leads to the following:

\begin{definition}[$\rca_0$]\label{def:fix-T}
For each normal prae-dilator $T$ we define a set $\fix(T)$ of terms by the following inductive clause: Given a finite set $a\subseteq\fix(T)$ and an element $\sigma\in T(|a|)$ with $\supp^T_{|a|}(\sigma)=|a|$, we add a term $\xi\langle a,\sigma\rangle\in\fix(T)$.
\end{definition}

Note that $\fix(T)\neq\emptyset$ is equivalent to $T(0)\neq\emptyset$. To define the order relation on $\fix(T)$ we need a suitable length function $L_T:\fix(T)\rightarrow\mathbb N$. In the context of $\rca_0$ it will be important that quantifiers of the form $\forall_{s\in\fix(T)}(L_T(s)\leq n\rightarrow\dots)$ are bounded. For this purpose we ensure that $L_T(s)$ bounds the G\"odel number $\ulcorner s\urcorner$ of the term $s$ (we have $\ulcorner s\urcorner=s$ if the previous definition is already arithmetized). Inductively we set
\begin{equation*}
L_T(\xi\langle a,\sigma\rangle)=\max\{\ulcorner\xi\langle a,\sigma\rangle\urcorner,1+\textstyle\sum_{s\in a}2\cdot L_T(s)\}.
\end{equation*}
To define a relation $<_{\fix(T)}$ on $\fix(T)$ we will decide $\xi\langle a,\sigma\rangle<_{\fix(T)}\xi\langle b,\tau\rangle$ by recursion on $L_T(\xi\langle a,\sigma\rangle)+L_T(\xi\langle b,\tau\rangle)$. In the recursion step we may assume that the restriction of $<_{\fix(T)}$ to $a\cup b$ is already determined (note that $2\cdot L_T(s)<L_T(\xi\langle a,\sigma\rangle)$ for $s\in a$ allows us to decide $s<_{\fix(T)}s$). If this restriction is linear, then we may consider the unique function $|\iota_a^{a\cup b}|:|a|\rightarrow|a\cup b|$ with
\begin{equation*}
\en_{a\cup b}\circ |\iota_a^{a\cup b}|=\iota_a^{a\cup b}\circ\en_a,
\end{equation*}
where $\en_{a\cup b}:|a\cup b|\rightarrow a\cup b$ and $\en_a:|a|\rightarrow a$ are the unique increasing enumerations with respect to $<_{\fix(T)}$ and $\iota_a^{a\cup b}:a\hookrightarrow a\cup b$ is the inclusion map (similarly for~\mbox{$\iota_b^{a\cup b}:b\hookrightarrow a\cup b$}). The following is reminiscent of Definition~\ref{def:coded-prae-dilator-reconstruct}, which makes sense because we are aiming at an isomorphism between $\fix(T)$ and $D^T(\fix(T))$.

\begin{definition}[$\rca_0$]\label{def:fix-T-order}
To define a binary relation $<_{\fix(T)}$ on $\fix(T)$ we recursively stipulate that $\xi\langle a,\sigma\rangle<_{\fix(T)}\xi\langle\tau,b\rangle$ holds if, and only if, the restriction of $<_{\fix(T)}$ to $a\cup b$ is linear and we have $T(|\iota_a^{a\cup b}|)(\sigma)<_{T(|a\cup b|)}T(|\iota_b^{a\cup b}|)(\tau)$.
\end{definition}

As suggested by the notation, we have the following property:

\begin{lemma}[$\rca_0$]\label{lem:fix-T-linear}
The relation $<_{\fix(T)}$ is a linear order on $\fix(T)$, for any normal prae-dilator~$T$.
\end{lemma}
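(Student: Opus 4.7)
My plan is to establish linearity by induction on a length bound. For each $n\in\mathbb N$, I would set $\fix_n(T)=\{s\in\fix(T)\,|\,L_T(s)\leq n\}$ and note that this is a finite set, since $L_T(s)$ bounds the G\"odel number of~$s$. The claim to prove by induction on $n$ is that $<_{\fix(T)}$ restricts to a linear order on $\fix_n(T)$. The key auxiliary observation is that whenever $\xi\langle a,\sigma\rangle\in\fix_n(T)$, the inequality $\sum_{s\in a}2\cdot L_T(s)<L_T(\xi\langle a,\sigma\rangle)$ built into the definition of $L_T$ forces $a\subseteq\fix_{n-1}(T)$; hence the restriction of $<_{\fix(T)}$ to $a\cup b$ that appears in Definition~\ref{def:fix-T-order} will already be decided (and known linear) by the induction hypothesis.

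The core step would be to identify the comparison of $\xi\langle a,\sigma\rangle$ and $\xi\langle b,\tau\rangle$ in $\fix(T)$ with the comparison of $\langle a,\sigma\rangle$ and $\langle b,\tau\rangle$ in $D^T(Y)$, where $Y=a\cup b$ carries the already established restriction of $<_{\fix(T)}$. By design both reduce to the single condition $T(|\iota_a^{a\cup b}|)(\sigma)<_{T(|a\cup b|)}T(|\iota_b^{a\cup b}|)(\tau)$. Trichotomy on the pair then follows from the fact that $D^T(Y)$ is a linear order, which is the lemma in Section~\ref{sect:representation} directly following Definition~\ref{def:coded-prae-dilator-reconstruct}; equality in $D^T(Y)$ enforces $a=b$ and $\sigma=\tau$, that is, equality of the original terms. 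For transitivity across three elements $r=\xi\langle c,\rho\rangle$, $s=\xi\langle a,\sigma\rangle$, $t=\xi\langle b,\tau\rangle$ of $\fix_n(T)$, I would apply the same argument to $Y=a\cup b\cup c\subseteq\fix_{n-1}(T)$, using that $T(|\iota_{a\cup b}^{a\cup b\cup c}|)$ preserves and reflects order by functoriality, so that the pairwise comparisons factor consistently through $T(|a\cup b\cup c|)$.

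The main obstacle, as I see it, is bookkeeping rather than mathematical depth: one has to verify that the recursion in Definition~\ref{def:fix-T-order} is genuinely well-formed, in the sense that at each call the restriction of $<_{\fix(T)}$ to the relevant finite set is already fixed by comparisons of strictly smaller $L_T$-sum, and that the induced functions $|\iota_a^{a\cup b}|$ are correctly determined by the linear order provided by the induction hypothesis. Once this is arranged, the argument is essentially a transport of the fact that $D^T$ preserves linearity, along an identification that Definitions~\ref{def:fix-T} and~\ref{def:fix-T-order} were tailor-made to support.
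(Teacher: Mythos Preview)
Your proposal is correct and follows essentially the same strategy as the paper: both arguments reduce the comparison of $\xi\langle a,\sigma\rangle$ and $\xi\langle b,\tau\rangle$ to the comparison of $T(|\iota_a^{a\cup b}|)(\sigma)$ and $T(|\iota_b^{a\cup b}|)(\tau)$ in $T(|a\cup b|)$, after the induction hypothesis has secured linearity on $a\cup b$; transitivity is handled by passing to $a\cup b\cup c$ in both.

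There is one organizational difference worth noting. The paper runs a simultaneous induction on $L_T(s)+L_T(t)$ (for trichotomy) and $L_T(s)+L_T(t)+L_T(r)$ (for transitivity), and in the trichotomy step it argues directly that $T(|\iota_a^{a\cup b}|)(\sigma)=T(|\iota_b^{a\cup b}|)(\tau)$ forces $a=b$ and $\sigma=\tau$, via naturality of $\supp^T$ and the condition $\supp^T_{|a|}(\sigma)=|a|$. You instead run a single induction on the bound~$n$, with the cleaner hypothesis ``$<_{\fix(T)}$ is linear on $\fix_{n-1}(T)$'', and you outsource the equality case to the lemma that $D^T(Y)$ is a linear order: equality in $D^T(Y)$ is literally equality of pairs, so $a=b$ and $\sigma=\tau$ come for free. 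This is a legitimate and slightly more modular packaging---the support-recovery computation is already hidden inside the proof that $D^T(Y)$ is linear---and it makes the role of the factor~$2$ in $L_T$ less delicate, since your induction hypothesis delivers full linearity on $a\cup b$ rather than just pairwise comparisons of bounded weight.
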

\begin{proof}
It is straightforward to see that $<_{\fix(T)}$ is irreflexive, invoking the same property of the orders~$<_{T(m)}$. To conclude one simultaneously verifies
\begin{gather*}
s<_{\fix(T)} t\lor s=t\lor t<_{\fix(T)}s,\\
s<_{\fix(T)} t\land t<_{\fix(T)} r\rightarrow s<_{\fix(T)} r,
\end{gather*}
by induction on $L_T(s)+L_T(t)$ and $L_T(s)+L_T(t)+L_T(r)$, respectively. Concerning trichotomy for $s=\xi\langle a,\sigma\rangle$ and $t=\xi\langle b,\tau\rangle$, we use the induction hypothesis to infer that $<_{\fix(T)}$ is linear on $a\cup b$ (note that $r<_{\fix(T)}r'<_{\fix(T)}r\rightarrow r<_{\fix(T)}r$ is available for $r,r'\in a\cup b$, due to the factor $2$ in the definition of $L_T$). According to the previous definition we obtain an inequality between $s$ and $t$, unless we have
\begin{equation*}
T(|\iota_a^{a\cup b}|)(\sigma)=T(|\iota_b^{a\cup b}|)(\tau).
\end{equation*}
Due to $\supp^T_{|a|}(\sigma)=|a|$ (cf.~Definition~\ref{def:fix-T}) and the naturality of $\supp^T$ we see that~$a$ can be recovered from the left side of this equality, namely as
\begin{multline*}
a=[\iota_a^{a\cup b}]^{<\omega}\circ[\en_a]^{<\omega}(\supp^T_{|a|}(\sigma))=[\en_{a\cup b}]^{<\omega}\circ[|\iota_a^{a\cup b}|]^{<\omega}(\supp^T_{|a|}(\sigma))
=\\
=[\en_{a\cup b}]^{<\omega}(\supp^T_{|a\cup b|}(T(|\iota_a^{a\cup b}|)(\sigma))).
\end{multline*}
Since $b$ can be recovered in the same way, the above equality implies $a=b$. We can conclude that $|\iota_a^{a\cup b}|$ and $|\iota_b^{a\cup b}|$ coincide (in fact, they are both equal to the identity on $|a|$). Since the order embedding $T(|\iota_a^{a\cup b}|)=T(|\iota_b^{a\cup b}|)$ is injective we get $\sigma=\tau$ and thus $s=t$, as required for trichotomy. To establish transitivity between $s=\xi\langle a,\sigma\rangle$, $t=\xi\langle b,\tau\rangle$ and $r=\xi\langle c,\rho\rangle$ one considers the inclusions into $a\cup b\cup c$ and uses the fact that $T(|a\cup b\cup c|)$ is a linear order.
\end{proof}

We will see that the following yields a fixed point in the sense of Definition~\ref{def:fixed-point}.

\begin{definition}[$\rca_0$]\label{def:fix-fixed-point}
 We define $\xi_T:D^T(\fix(T))\rightarrow\fix(T)$ by stipulating
 \begin{equation*}
  \xi_T(\langle a,\sigma\rangle)=\xi\langle a,\sigma\rangle,
 \end{equation*}
 for each normal prae-dilator~$T$.
\end{definition}

Let us verify the expected property:

\begin{proposition}[$\rca_0$]\label{prop:fix-T-fixed-point}
 Given a normal prae-dilator~$T$, the order $\fix(T)$ and the function $\xi_T:D^T(\fix(T))\rightarrow\fix(T)$ form a fixed point of $T$.
\end{proposition}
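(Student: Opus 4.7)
The plan is short: I would unwind the two definitions and check that the side condition in Definition~\ref{def:fix-T-order} is automatic once we know $<_{\fix(T)}$ is linear.

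First I would verify that $\xi_T$ is well-defined as a function into $\fix(T)$. By Definition~\ref{def:coded-prae-dilator-reconstruct} every element of $D^T(\fix(T))$ is a pair $\langle a,\sigma\rangle$ with $a\in[\fix(T)]^{<\omega}$, $\sigma\in T(|a|)$, and $\supp^T_{|a|}(\sigma)=|a|$. But these are exactly the hypotheses under which Definition~\ref{def:fix-T} admits a term $\xi\langle a,\sigma\rangle\in\fix(T)$, so $\xi_T$ takes values in $\fix(T)$.

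Next I would show that $\xi_T$ is an embedding. Lemma~\ref{lem:fix-T-linear} tells us that $<_{\fix(T)}$ is a linear order on $\fix(T)$, hence its restriction to any finite subset $a\cup b\subseteq\fix(T)$ is linear. Comparing Definition~\ref{def:coded-prae-dilator-reconstruct} with Definition~\ref{def:fix-T-order}, the only difference between $<_{D^T(\fix(T))}$ and the relation used to define $\xi\langle a,\sigma\rangle<_{\fix(T)}\xi\langle b,\tau\rangle$ is precisely this linearity clause, which is now automatic. It follows that
\[
\langle a,\sigma\rangle<_{D^T(\fix(T))}\langle b,\tau\rangle\quad\Leftrightarrow\quad \xi_T(\langle a,\sigma\rangle)<_{\fix(T)}\xi_T(\langle b,\tau\rangle),
\]
so $\xi_T$ is order preserving. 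Injectivity is then immediate from trichotomy on both sides: if $\langle a,\sigma\rangle\neq\langle b,\tau\rangle$, then (using that $D^T(\fix(T))$ is a linear order, as recorded before Definition~\ref{def:coded-dilator}) one pair is strictly below the other, and this inequality transfers to the images.

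I do not anticipate any serious obstacle; Lemma~\ref{lem:fix-T-linear} has already done the delicate work (notably recovering $a$ from $T(|\iota_a^{a\cup b}|)(\sigma)$ via the support condition needed for trichotomy), and the present statement is a direct consequence of the fact that the order on $\fix(T)$ was designed to mirror the order on $D^T(\fix(T))$ term-by-term.
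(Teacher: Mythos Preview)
Your proposal is correct and follows essentially the same route as the paper: both arguments reduce the claim to comparing Definitions~\ref{def:coded-prae-dilator-reconstruct} and~\ref{def:fix-T-order} and then invoke Lemma~\ref{lem:fix-T-linear} to discharge the linearity side condition. The paper is more terse (it only records the forward implication, which already suffices for an embedding between linear orders), while you additionally spell out well-definedness and derive injectivity explicitly; these are harmless elaborations rather than a different strategy.
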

\begin{proof}
 According to Definition~\ref{def:fixed-point} we must show that $\xi_T$ is an order embedding. In view of Definitions~\ref{def:coded-prae-dilator-reconstruct} and~\ref{def:fix-T-order} the implication
 \begin{equation*}
  \langle a,\sigma\rangle <_{D^T(\fix(T))}\langle b,\tau\rangle\quad\Rightarrow\quad \xi\langle a,\sigma\rangle <_{\fix(T)}\xi\langle b,\tau\rangle
 \end{equation*}
 is immediate, provided $<_{\fix(T)}$ is linear on $a\cup b$. The latter holds by Lemma~\ref{lem:fix-T-linear}.
\end{proof}

After Definition~\ref{def:fixed-point} we have discussed additional properties of fixed points, which one might want to require. Let us show that these properties are satisfied for the fixed point that we have constructed.

\begin{theorem}[$\rca_0$]\label{thm:fixed-point-additional}
 The following holds for any normal prae-dilator~$T$:
 \begin{enumerate}[label=(\alph*)]
  \item The embedding $\xi_T:D^T(\fix(T))\rightarrow\fix(T)$ is an isomorphism.
  \item Given any fixed point $\xi_X:D^T(X)\rightarrow X$ of $T$, there is an order embedding of~$\fix(T)$ into~$X$.
 \end{enumerate}
\end{theorem}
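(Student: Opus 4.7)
Part~(a) is essentially tautological. By Definition~\ref{def:fix-T}, every element of $\fix(T)$ has the form $\xi\langle a,\sigma\rangle$ with $a\in[\fix(T)]^{<\omega}$, $\sigma\in T(|a|)$ and $\supp^T_{|a|}(\sigma)=|a|$; these are precisely the membership conditions for $\langle a,\sigma\rangle\in D^T(\fix(T))$ from Definition~\ref{def:coded-prae-dilator-reconstruct}. So $\xi_T$ is surjective, and Proposition~\ref{prop:fix-T-fixed-point} has already provided that it is an order embedding, hence an isomorphism.

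For part~(b), the plan is to construct $\iota\colon\fix(T)\to X$ by recursion on the length function~$L_T$ via the clause
\begin{equation*}
 \iota(\xi\langle a,\sigma\rangle)=\xi_X(\langle\iota[a],\sigma\rangle),\qquad\iota[a]=\{\iota(s)\,|\,s\in a\},
\end{equation*}
which is well-posed in $\rca_0$ by the same bounded-recursion justification that underlies Definition~\ref{def:fix-T-order}, since $L_T(s)<L_T(\xi\langle a,\sigma\rangle)$ for every $s\in a$. I would then prove simultaneously, by induction on $L_T$, that (i)~the right-hand side actually lies in $X$ and (ii)~$\iota$ is order-preserving on the terms of length at most the current one. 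For~(i), the induction hypothesis~(ii) applied to $a$ ensures that $\iota\!\restriction\!a$ is an order-preserving injection, so $|\iota[a]|=|a|$ and the enumeration $\en_{\iota[a]}$ equals $\iota\circ\en_a$; then $\sigma\in T(|\iota[a]|)$ with $\supp^T_{|\iota[a]|}(\sigma)=|\iota[a]|$, witnessing $\langle\iota[a],\sigma\rangle\in D^T(X)$, and $\xi_X$ then produces the required element of $X$.

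For~(ii), given $s=\xi\langle a,\sigma\rangle<_{\fix(T)}t=\xi\langle b,\tau\rangle$, the induction hypothesis makes $\iota$ order-preserving on $a\cup b$, so $\iota[a\cup b]=\iota[a]\cup\iota[b]$ and the finite-enumeration functions satisfy $|\iota_a^{a\cup b}|=|\iota_{\iota[a]}^{\iota[a]\cup\iota[b]}|$, and similarly for~$b$. Unfolding Definition~\ref{def:fix-T-order} on the left and Definition~\ref{def:coded-prae-dilator-reconstruct} on the right then shows that the defining inequality of $s<_{\fix(T)}t$ is literally the same as that of $\langle\iota[a],\sigma\rangle<_{D^T(X)}\langle\iota[b],\tau\rangle$; applying the embedding $\xi_X$ yields $\iota(s)<_X\iota(t)$. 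The main obstacle I expect is the bookkeeping needed to verify that the two enumeration maps genuinely coincide (and not merely as a coincidence of coding), which ultimately rests on the uniqueness of the order-isomorphism between two finite linear orders of the same cardinality; apart from that, the argument is patterned closely on Lemma~\ref{lem:fix-T-linear} and introduces no new technical ingredients.
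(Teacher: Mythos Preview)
Your proposal is correct and follows essentially the same route as the paper's proof: part~(a) is dispatched by noting that surjectivity of $\xi_T$ is immediate from Definitions~\ref{def:fix-T} and~\ref{def:coded-prae-dilator-reconstruct}, and for part~(b) the paper defines the same map (denoted $j$ there, with $[j]^{<\omega}(a)$ in place of your $\iota[a]$) and proves well-definedness and order preservation by the same simultaneous induction, including the verification that $|\iota_a^{a\cup b}|=|\iota_{\iota[a]}^{\iota[a]\cup\iota[b]}|$ via the uniqueness of increasing enumerations. The only cosmetic difference is that the paper organizes the induction over $L_T(r)$ for the first claim and $L_T(s)+L_T(t)$ for the second, whereas you phrase it as a single induction on the current length bound; both work.
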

\begin{proof}
 In view of Definitions~\ref{def:fix-T} and~\ref{def:coded-prae-dilator-reconstruct} it is clear that $\xi\langle a,\sigma\rangle\in\fix(T)$ implies $\langle a,\sigma\rangle\in D^T(\fix(T))$, which yields claim~(a). To establish claim~(b) we construct a function $j:\fix(T)\rightarrow X$ by recursion over terms, setting
 \begin{equation*}
  j(\xi\langle a,\sigma\rangle)=\xi_X(\langle[j]^{<\omega}(a),\sigma\rangle).
 \end{equation*}
 By simultaneous induction on $L_T(r)$ and $L_T(s)+L_T(t)$, respectively, we show
 \begin{align*}
  r\in\fix(T)&\rightarrow j(r)\in X,\\
  s<_{\fix(T)}t&\rightarrow j(s)<_X j(t).
 \end{align*}
 To establish the first claim we write $r=\xi\langle a,\sigma\rangle$. The simultaneous induction hypothesis implies that $j$ is strictly increasing and hence injective on~$a$. Given that~$a$ and~$[j]^{<\omega}(a)$ have the cardinality, it is immediate that \mbox{$\xi\langle a,\sigma\rangle\in\fix(T)$} implies $\langle [j]^{<\omega}(a),\sigma\rangle\in D^T(X)$, as needed. To show the second claim we assume
 \begin{equation*}
  s=\xi\langle a,\sigma\rangle<_{\fix(T)}\xi\langle b,\tau\rangle=t.
 \end{equation*}
 According to Definition~\ref{def:fix-T-order} this inequality amounts to
 \begin{equation*}
  T(|\iota_a^{a\cup b}|)(\sigma)<_{T(|a\cup b|)} T(|\iota_b^{a\cup b}|)(\tau)
 \end{equation*}
 The induction hypothesis tells us that $j$ is order preserving on $a\cup b$. This yields
 \begin{equation*}
  j\circ\en_{a\cup b}=\en_{[j]^{<\omega}(a\cup b)},
 \end{equation*}
 since the functions on both sides enumerate the set $[j]^{<\omega}(a\cup b)$ in increasing order (with respect to $<_X$). Using the defining property of $|\iota_a^{a\cup b}|$ we obtain
 \begin{multline*}
  \en_{[j]^{<\omega}(a\cup b)}\circ|\iota_a^{a\cup b}|=j\circ\en_{a\cup b}\circ|\iota_a^{a\cup b}|=j\circ\iota_a^{a\cup b}\circ\en_a=\\
  =\iota_{[j]^{<\omega}(a)}^{[j]^{<\omega}(a\cup b)}\circ j\circ\en_a=\iota_{[j]^{<\omega}(a)}^{[j]^{<\omega}(a\cup b)}\circ \en_{[j]^{<\omega}(a)},
 \end{multline*}
 where the last equality is established as above. Since the functions $|f|$ are uniquely determined by their defining property, we can conclude
 \begin{equation*}
 \left|\iota_{[j]^{<\omega}(a)}^{[j]^{<\omega}(a\cup b)}\right|=\left|\iota_a^{a\cup b}\right|.
 \end{equation*}
 The same holds with $b$ at the place of $a$. Hence we get
 \begin{equation*}
 T\left(\left|\iota_{[j]^{<\omega}(a)}^{[j]^{<\omega}(a\cup b)}\right|\right)(\sigma)<_{T(|a\cup b|)} T\left(\left|\iota_{[j]^{<\omega}(b)}^{[j]^{<\omega}(a\cup b)}\right|\right)(\tau).
 \end{equation*}
 In view of Definition~\ref{def:fix-T-order} this yields $\langle[j]^{<\omega}(a),\sigma\rangle<_{D^T(X)}\langle[j]^{<\omega}(b),\tau\rangle$. Since $\xi_X$ is an order embedding we can infer
 \begin{equation*}
  j(s)=\xi_X(\langle[j]^{<\omega}(a),\sigma\rangle)<_X\xi_X(\langle[j]^{<\omega}(b),\tau\rangle)=j(t),
 \end{equation*}
 as required.
\end{proof}

Let us point out that the function $j$ that we have constructed in the previous proof respects the structure of the fixed points $(\fix(T),\xi_T)$ and $(X,\xi_X)$. To see what this means we recall that $j$ induces a function $D^T(j):D^T(\fix(T))\rightarrow D^T(X)$, given by $D^T(j)(\langle a,\sigma\rangle)=\langle[j]^{<\omega}(a),\sigma\rangle$ (cf.~the discussion after Definition~\ref{def:coded-dilator}). Hence the defining equation of $j$ amounts to
\begin{equation*}
 j\circ\xi_T=\xi_X\circ D^T(j).
\end{equation*}
An order embedding $j$ with this property could be called a morphism of fixed points. A straightforward induction on terms shows that all morphisms from $\fix(T)$ to $X$ must coincide. Hence $\fix(T)$ can be characterized as the initial fixed point of $T$, which is unique up to isomorphism.

In the first half of this section we have constructed a fixed point $\fix(T)$ of a given normal prae-dilator~$T$, working in $\rca_0$. To complete the proof of our main result we will now use $\Pi^1_1$-induction along the natural numbers to show that $\fix(T)$ is well-founded whenever $X\mapsto D^T(X)$ preserves well-foundedness (so that $T$ is a normal dilator). For this purpose we consider the construction of $\fix(T)$ in stages:

\begin{definition}[$\rca_0$]\label{def:term-height}
 Given a normal prae-dilator $T$, we define a height function $h_T:\fix(T)\rightarrow\mathbb N$ by setting
 \begin{equation*}
  h_T(\xi\langle a,\sigma\rangle)=\max(\{h_T(s)+1\,|\,s\in a\}\cup\{0\}).
 \end{equation*}
 For each number $n$ we consider the set
 \begin{equation*}
  \fix_n(T)=\{s\in\fix(T)\,|\, h_T(s)<n\},
 \end{equation*}
 ordered as a subset of $\fix(T)$.
\end{definition}

An infinite union of well-orders is not generally well-ordered. However, it is straightforward to see that an order is well-founded if it is the union of well-founded initial segments. This explains the importance of the following result, which is similar to Proposition~5.6 of~\cite{freund-rathjen_derivatives}. Note that the proof makes crucial use of the assumption that $T$ is normal.

\begin{proposition}[$\rca_0$]\label{prop:fix-initial-segment}
 Consider a normal prae-dilator~$T$. The order $\fix_n(T)$ is an initial segment of $\fix(T)$, for any number~$n$.
\end{proposition}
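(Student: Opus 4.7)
My plan is to prove the statement by induction on $n$, with the heart of the argument being a support-monotonicity lemma that reflects the normality of $T$.

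First I would establish the following lemma: if $s = \xi\langle a,\sigma\rangle <_{\fix(T)} \xi\langle b,\tau\rangle = t$ with $a\neq\emptyset$, then $b\neq\emptyset$ and $\max(a) \leq_{\fix(T)} \max(b)$, where maxima refer to the linear order $<_{\fix(T)}$ restricted to the finite sets $a,b$ (legitimate by Lemma~\ref{lem:fix-T-linear}). Suppose, toward a contradiction, that $b = \emptyset$ or $\max(a) >_{\fix(T)} \max(b)$. Set $c = a\cup b$ and $m = |c|$; in either case $\en_c(m-1) = \max(c) = \max(a)$ belongs to $a$ but not to $b$. Using the naturality of $\supp^T$ together with the defining conditions $\supp^T_{|a|}(\sigma) = |a|$ and $\supp^T_{|b|}(\tau) = |b|$ from Definition~\ref{def:fix-T}, one verifies that
\[
\supp^T_m\bigl(T(|\iota_a^{c}|)(\sigma)\bigr) \ni m-1 \quad\text{and}\quad \supp^T_m\bigl(T(|\iota_b^{c}|)(\tau)\bigr) \subseteq \{0,\dots,m-2\}.
\]
The defining property of $\mu^T$ in Definition~\ref{def:normal-dil} then yields
\[
T(|\iota_b^{c}|)(\tau) <_{T(m)} \mu^T_m(m-1) \leq_{T(m)} T(|\iota_a^{c}|)(\sigma),
\]
which contradicts the characterization of $s <_{\fix(T)} t$ supplied by Definition~\ref{def:fix-T-order}.

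Given the lemma, the induction is routine. The base case $n = 0$ is trivial since $\fix_0(T) = \emptyset$. For the inductive step, assume $\fix_n(T)$ is an initial segment and let $s <_{\fix(T)} t$ with $h_T(t) < n+1$; writing $s = \xi\langle a,\sigma\rangle$ and $t = \xi\langle b,\tau\rangle$, I want $h_T(s) < n+1$. If $a = \emptyset$ then $h_T(s) = 0$ and we are done. Otherwise the lemma gives $b\neq\emptyset$ and $\max(a) \leq_{\fix(T)} \max(b)$, and the inequality $h_T(\max(b)) + 1 \leq h_T(t) \leq n$ places $\max(b)$ in $\fix_n(T)$. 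Each $s' \in a$ satisfies $s' \leq_{\fix(T)} \max(b)$: when $s' = \max(b)$ we already have $h_T(s') + 1 \leq n$, while when $s' <_{\fix(T)} \max(b)$ the induction hypothesis applies to give $s' \in \fix_n(T)$ and hence $h_T(s') + 1 \leq n$. Taking the maximum over $s' \in a$ yields $h_T(s) \leq n$, as required.

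The main obstacle is the support-monotonicity lemma, which is the only place where normality (Definition~\ref{def:normal-dil}) is used and which demands careful bookkeeping about the positions assigned by the enumeration $\en_c$; everything afterwards is formal. The induction on $n$ poses no issue in $\rca_0$, since ``$\fix_n(T)$ is an initial segment of $\fix(T)$'' is a $\Pi^0_1$ statement in $n$ (universal over pairs of terms, with a decidable matrix), which lies within the induction scheme available in $\rca_0$.
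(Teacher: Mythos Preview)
Your proof is correct, but it takes a different route from the paper's. The paper establishes the single implication
\[
h_T(s) < h_T(t) \quad\Rightarrow\quad s <_{\fix(T)} t
\]
by induction on $L_T(s)+L_T(t)$: given $h_T(s)<h_T(t)$ one picks $t'\in b$ with $h_T(s')<h_T(t')$ for all $s'\in a$, so that the induction hypothesis gives $a\subseteq\fix(T)\!\restriction\!t'$, and then Proposition~\ref{prop:normal-dilator-extend} (the extended form of normality on $D^T(\fix(T))$) yields $\langle a,\sigma\rangle < D^{\mu^T}_{\fix(T)}(t') \leq \langle b,\tau\rangle$. This is just the statement that $h_T$ is weakly monotone, which immediately gives the initial-segment property for every~$n$ at once.

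Your argument instead isolates a max-support monotonicity lemma ($s<t$ and $a\neq\emptyset$ force $\max(a)\leq\max(b)$), proved directly from Definition~\ref{def:normal-dil} at the level of finite orders, and then runs an outer induction on~$n$ to propagate the height bound from $\max(b)$ down to all elements of~$a$. The two approaches are essentially dual: the paper builds in the term-length induction and appeals to the already-packaged Proposition~\ref{prop:normal-dilator-extend}, whereas you unpack normality by hand and push the recursion to the outside. Your version is slightly more elementary in that it avoids the machinery of Proposition~\ref{prop:normal-dilator-extend}; the paper's version has the advantage of yielding the cleaner global fact that $h_T$ is order-preserving on~$\fix(T)$, from which the proposition follows without any further induction.
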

\begin{proof}
 It suffices to show that
 \begin{equation*}
  h_T(s)<h_T(t)\quad\Rightarrow\quad s<_{\fix(T)} t
 \end{equation*}
 holds for all $s,t\in\fix(T)$. Arguing by induction on $L_T(s)+L_T(t)$, we consider terms $s=\xi\langle a,\sigma\rangle$ and $t=\xi\langle b,\tau\rangle$. If we have $h_T(s)<h_T(t)$, then there must be an element $t'\in b$ such that $h_T(s')<h_T(t')$ holds for all $s'\in a$. By induction hypothesis we get $a\subseteq\fix(T)\!\restriction\!t'$. Also note that $s\in\fix(T)$ implies $\langle a,\sigma\rangle\in D^T(\fix(T))$. Since $T=(T,\mu^T)$ is normal we can invoke Proposition~\ref{prop:normal-dilator-extend} to obtain
 \begin{equation*}
  \langle a,\sigma\rangle<_{D^T(\fix(T))} D^{\mu^T}_{\fix(T)}(t').
 \end{equation*}
 On the other hand $t'\in b$ yields $b\not\subseteq \fix(T)\!\restriction\!t'$ and hence
 \begin{equation*}
  D^{\mu^T}_{\fix(T)}(t')\leq_{D^T(\fix(T))}\langle b,\tau\rangle.
 \end{equation*}
 From Proposition~\ref{prop:fix-T-fixed-point} we know that $\xi_T$ is order preserving. We can thus conclude
 \begin{equation*}
  s=\xi_T(\langle a,\sigma\rangle)<_{\fix(T)}\xi_T(\langle b,\tau\rangle)=t,
 \end{equation*}
 as required.
\end{proof}

In order to deduce the well-foundedness of $\fix_{n+1}(T)$ from the one of $\fix_n(T)$ we will use the following result:

\begin{proposition}[$\rca_0$]
 For any normal prae-dilator $T$ and any number $n$ we have an isomorphism $D^T(\fix_n(T))\cong\fix_{n+1}(T)$ of linear orders.
\end{proposition}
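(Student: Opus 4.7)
The plan is to show that the desired isomorphism is simply the appropriate restriction of the embedding $\xi_T$ from Proposition~\ref{prop:fix-T-fixed-point}, so that the content reduces to checking that a few definitions line up.

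First I would unfold the height function to characterise $\fix_{n+1}(T)$. Since
\begin{equation*}
 h_T(\xi\langle a,\sigma\rangle)=\max(\{h_T(s)+1\mid s\in a\}\cup\{0\}),
\end{equation*}
a term $\xi\langle a,\sigma\rangle$ lies in $\fix_{n+1}(T)$ iff $h_T(s)<n$ for every $s\in a$, i.e.~iff $a\subseteq\fix_n(T)$. Combined with Definition~\ref{def:fix-T} this gives
\begin{equation*}
 \fix_{n+1}(T)=\{\xi\langle a,\sigma\rangle\mid a\in[\fix_n(T)]^{<\omega},\ \sigma\in T(|a|),\ \supp^T_{|a|}(\sigma)=|a|\},
\end{equation*}
and by Definition~\ref{def:coded-prae-dilator-reconstruct} the right-hand side is in obvious bijection with~$D^T(\fix_n(T))$.

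Second I would take $\Phi\colon D^T(\fix_n(T))\to\fix_{n+1}(T)$ to be the restriction of $\xi_T$, that is, $\Phi(\langle a,\sigma\rangle)=\xi\langle a,\sigma\rangle$. The characterisation above makes $\Phi$ well-defined (its image lies in $\fix_{n+1}(T)$) and surjective at a glance. Injectivity will follow from order-preservation, which is where the one substantive point arises.

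Third, for order-preservation I would invoke Proposition~\ref{prop:fix-T-fixed-point}, which says that $\xi_T$ is an order embedding from $D^T(\fix(T))$ to $\fix(T)$. The subtlety to verify is that the ordering $<_{D^T(\fix_n(T))}$ is the same as the restriction of $<_{D^T(\fix(T))}$ to pairs $\langle a,\sigma\rangle,\langle b,\tau\rangle$ with $a,b\subseteq\fix_n(T)$. Since, by definition, $<_{\fix_n(T)}$ is the restriction of $<_{\fix(T)}$ to $\fix_n(T)$, the enumerations $\en_a$, $\en_b$, $\en_{a\cup b}$ taken with respect to either order coincide, and therefore so do the induced morphisms $|\iota_a^{a\cup b}|$ and $|\iota_b^{a\cup b}|$. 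Consequently the criterion
\begin{equation*}
 T(|\iota_a^{a\cup b}|)(\sigma)<_{T(|a\cup b|)}T(|\iota_b^{a\cup b}|)(\tau)
\end{equation*}
from Definitions~\ref{def:coded-prae-dilator-reconstruct} and~\ref{def:fix-T-order} is the same in both orders, and $\Phi$ inherits order-preservation directly from Proposition~\ref{prop:fix-T-fixed-point}. This yields the desired isomorphism.

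The only non-routine step is this compatibility of the two ambient orders in the third paragraph; once that is observed, the entire proof is essentially a matching up of the definitions of $\fix_{n+1}(T)$, $D^T(\fix_n(T))$ and $\xi_T$.
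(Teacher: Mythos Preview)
Your proposal is correct and follows essentially the same approach as the paper: the isomorphism is the restriction of $\xi_T$, and one checks that its image is exactly $\fix_{n+1}(T)$ via the height computation. The only cosmetic difference is that where you verify inline that $D^T(\fix_n(T))$ carries the restriction of the order on $D^T(\fix(T))$, the paper simply notes that $D^T(\fix_n(T))=\{\langle a,\sigma\rangle\in D^T(\fix(T))\mid a\subseteq\fix_n(T)\}$ is a suborder (referring back to the proof of Theorem~\ref{thm:normal-dil-fct}) and invokes Theorem~\ref{thm:fixed-point-additional}(a) for surjectivity of $\xi_T$.
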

\begin{proof}
Proposition~\ref{prop:fix-T-fixed-point} tells us that $\xi_T:D^T(\fix(T))\rightarrow\fix(T)$ is an order embedding. Also note that
 \begin{equation*}
 D^T(\fix_n(T))=\{\langle a,\sigma\rangle\in D^T(\fix(T))\,|\,a\subseteq\fix_n(T)\}
 \end{equation*}
 is a suborder of $D^T(\fix(T))$ (cf.~the proof of Theorem~\ref{thm:normal-dil-fct}). To conclude it suffices to show that $\xi_T$ maps $D^T(\fix_n(T))$ onto $\fix_{n+1}(T)$. Given $a\subseteq\fix_n(T)$, we observe that $h_T(s)<n$ holds for all $s\in a$. This implies
 \begin{equation*}
 h_T(\xi_T(\langle a,\sigma\rangle))=h_T(\xi\langle a,\sigma\rangle)=\sup\{h_T(s)+1\,|\,s\in a\}\leq n<n+1,
 \end{equation*}
 as required for $\xi_T(\langle a,\sigma\rangle)\in\fix_{n+1}(T)$. Conversely, Theorem~\ref{thm:fixed-point-additional} shows that any element of $\fix_{n+1}(T)$ arises as the image $\xi_T(\langle a,\sigma\rangle)$ of some $\langle a,\sigma\rangle\in D^T(\fix(T))$. As above we see that $h_T(\xi_T(\langle a,\sigma\rangle))<n+1$ implies $h_T(s)+1<n+1$ and hence $h_T(s)<n$ for all $s\in a$, so that we get $a\subseteq\fix_n(T)$.
\end{proof}

Putting things together, we can now prove the main result of our paper:

\begin{theorem}[$\aca_0$]\label{thm:main-result}
The following are equivalent:
\begin{enumerate}[label=(\roman*)]
\item Every coded normal dilator has a well-founded fixed point.
\item If $T$ is a coded normal dilator, then $\fix(T)$ is well-founded.
\item The principle of $\Pi^1_1$-induction along the natural numbers holds.
\end{enumerate}
\end{theorem}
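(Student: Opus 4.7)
The plan is to establish the cycle (ii)$\Rightarrow$(i)$\Rightarrow$(iii)$\Rightarrow$(ii). Two of the three implications are essentially at hand already. For (ii)$\Rightarrow$(i) one merely observes that Proposition~\ref{prop:fix-T-fixed-point} supplies the fixed point $(\fix(T),\xi_T)$, which is well-founded precisely when $\fix(T)$ is. And (i)$\Rightarrow$(iii) is exactly Theorem~\ref{thm:fixed-to-ind}. The interesting work therefore lies in (iii)$\Rightarrow$(ii).

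For that direction I would fix a coded normal dilator $T$ and apply $\Pi^1_1$-induction to the formula
\begin{equation*}
\varphi(n)\equiv\text{``$\fix_n(T)$ is well-founded''},
\end{equation*}
treating $T$ as a set parameter. This formula is $\Pi^1_1$ in $n$ once well-foundedness is phrased as the absence of a descending $\mathbb N$-sequence, so (iii) does apply. The base case $\varphi(0)$ is immediate because $\fix_0(T)=\emptyset$. For the inductive step I would invoke the preceding proposition, which yields $D^T(\fix_n(T))\cong\fix_{n+1}(T)$: assuming $\varphi(n)$, the hypothesis that $T$ is a dilator makes $D^T(\fix_n(T))$ well-founded, and the isomorphism transfers this to $\fix_{n+1}(T)$, giving $\varphi(n+1)$.

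Once $\forall n\,\varphi(n)$ has been secured, passing to the whole of $\fix(T)$ should be routine. A descending sequence $(s_k)_{k\in\mathbb N}$ in $\fix(T)$ would, after setting $n=h_T(s_0)+1$, be confined to $\fix_n(T)$: Proposition~\ref{prop:fix-initial-segment} makes $\fix_n(T)$ an initial segment of $\fix(T)$ that contains $s_0$, and hence contains every $s_k<_{\fix(T)}s_0$. This would contradict the well-foundedness of $\fix_n(T)$ delivered by the induction, so $\fix(T)$ is well-founded as required.

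The only point that will require care is the book-keeping around the induction itself: one has to check that ``$\fix_n(T)$ is well-founded'' can be uniformly represented as a $\Pi^1_1$-formula in $n$ (with $T$ as a parameter) in the exact format demanded by the principle of (iii), and that the isomorphism and dilator property used in the inductive step are available in $\aca_0$. Both should amount to formal exercises building on the definitions and results of the earlier sections.
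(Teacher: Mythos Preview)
Your proposal is correct and mirrors the paper's proof almost exactly: the same cycle (ii)$\Rightarrow$(i)$\Rightarrow$(iii)$\Rightarrow$(ii), the same appeal to Proposition~\ref{prop:fix-T-fixed-point} and Theorem~\ref{thm:fixed-to-ind} for the first two implications, and the same $\Pi^1_1$-induction on ``$\fix_n(T)$ is well-founded'' using $\fix_0(T)=\emptyset$, the isomorphism $\fix_{n+1}(T)\cong D^T(\fix_n(T))$, and Proposition~\ref{prop:fix-initial-segment} to pass from the stages to $\fix(T)$. The paper phrases the last step as ``$\fix(T)$ is a union of well-founded initial segments'' rather than via an explicit descending sequence, but this is the same argument.
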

\begin{proof}
From Theorem~\ref{thm:fixed-to-ind} we know that (i) implies (iii). Let us point out that the proof of this direction uses arithmetical comprehension, in the form of the Kleene normal form theorem and the well-foundedness of the Kleene-Brouwer order on a tree without infinite branch. The other directions can be established over $\rca_0$: To see that (ii) implies (i) it suffices to recall that $\fix(T)$ is a fixed point of the given dilator $T$, due to Proposition~\ref{prop:fix-T-fixed-point}. It remains to show that (iii) implies (ii). For this purpose we consider a normal dilator~$T$. According to Proposition~\ref{prop:fix-initial-segment} the order $\fix(T)$ can be written as a union
\begin{equation*}
\fix(T)=\bigcup_{n\in\mathbb N}\fix_n(T)
\end{equation*}
of initial segments. Thus the well-foundedness of $\fix(T)$ reduces to the claim that $\fix_n(T)$ is well-founded for every number~$n$. To establish the latter we argue by induction on~$n$, as justified by~(iii). In view of $\fix_0(T)=\emptyset$ the base case $n=0$ is trivial. Now assume that $\fix_n(T)$ is well-founded. Since $T$ is a dilator this implies the well-foundedness of $D^T(\fix_n(T))$ (cf.~Definition~\ref{def:coded-dilator}). Using the previous proposition we can infer that $\fix_{n+1}(T)\cong D^T(\fix_n(T))$ is well-founded, as required for the induction step.
\end{proof}

The author would like to thank the referee for pointing out the following:

\begin{remark}\label{rmk:large-fixed-points}
The statements from Theorem~\ref{thm:main-result} are also equivalent to the following, still over $\aca_0$:
\begin{enumerate}[label=(\roman*)]\setcounter{enumi}{3}
\item If $T$ is a coded normal dilator, then any well-order~$X$ can be embedded into some well-founded fixed point of~$T$.
\end{enumerate}
It is immediate that (iv) implies~(i). Conversely, we will establish~(iv) by applying~(i) to a modified dilator~$T[X]$. Given a finite order $n=\{0,\dots,n-1\}$, we consider the disjoint union
\begin{equation*}
T[X](n)=X+T(n).
\end{equation*}
To obtain a linear order, we declare that $x<_{T[X](n)}y<_{T[X](n)}\sigma<_{T[X](n)}\tau$ holds for any elements $x<_X y$ of~$X$ and $\sigma<_{T(n)}\tau$ of $T(n)$. Given a morphism $f:n\to m$, we define $T[X](f):T[X](m)\to T[X](n)$ by setting
\begin{equation*}
T[X](f)(\sigma)=\begin{cases}
\sigma & \text{if $\sigma\in X\subseteq T[X](m)$,}\\
T(f)(\sigma)   & \text{if $\sigma\in T(m)\subseteq T[X](m)$.}\\
\end{cases}
\end{equation*}
To obtain a coded prae-dilator, we also define $\supp^{T[X]}_n:T[X](n)\to[n]^{<\omega}$ by
\begin{equation*}
\supp^{T[X]}_n(\sigma)=\begin{cases}
\emptyset & \text{if $\sigma\in X\subseteq T[X](n)$,}\\
\supp^{T}_n(\sigma)   & \text{if $\sigma\in T(n)\subseteq T[X](n)$.}\\
\end{cases}
\end{equation*}
As a normal dilator, $T$~comes with a natural family of embeddings $\mu^T_n:n\to T(n)$. In order to turn $T[X]$ into a coded normal prae-dilator we set
\begin{equation*}
\mu^{T[X]}_n(m)=\mu^T_n(m)\in T(n)\subseteq T[X](n).
\end{equation*}
The condition from Definition~\ref{def:normal-dil} is preserved as both $x<_{T[X](n)}\mu^{T[X]}_n(m)\in T(n)$ and $\supp^{T[X]}_n(x)=\emptyset\subseteq m$ is true for any $x\in X$. For any order~$Y$ we have an order isomorphism
\begin{equation*}
X+D^T(Y)\cong D^{T[X]}(Y),
\end{equation*}
where $x\in X$ corresponds to $\langle\emptyset,x\rangle\in D^{T[X]}(Y)$. Given that $T$ is a dilator and $X$ is a well-order, it follows that $Y\mapsto D^{T[X]}(Y)$ preserves well-foundedness. This means that $D^{T[X]}$ is a coded normal dilator. Hence statement~(i) from Theorem~\ref{thm:main-result} yields a well-order~$Y$ that admits an embedding
\begin{equation*}
\xi:D^{T[X]}(Y)\rightarrow Y.
\end{equation*}
In view of $X+D^T(Y)\cong D^{T[X]}(Y)$ we obtain an embedding of $D^T(Y)$ into~$Y$, which shows that $Y$ is also a fixed point of $T$ (but here we do not get $D^T(Y)\cong Y$, in contrast to Theorem~\ref{thm:fixed-point-additional}). To establish~(iv) it remains to embed $X$ into~$Y$. For this purpose we compose~$\xi$ with the inclusion of $X$ into $X+D^T(Y)\cong D^{T[X]}(Y)$.
\end{remark}

\bibliographystyle{amsplain}
\bibliography{Single_Fixed-point}

\newcommand{\noopsort}[1]{}
\providecommand{\bysame}{\leavevmode\hbox to3em{\hrulefill}\thinspace}
\providecommand{\MR}{\relax\ifhmode\unskip\space\fi MR }
\providecommand{\MRhref}[2]{%
  \href{http://www.ams.org/mathscinet-getitem?mr=#1}{#2}
}
\providecommand{\href}[2]{#2}
\begin{thebibliography}{10}

\bibitem{aczel-phd}
Peter Aczel, \emph{Mathematical problems in logic}, Ph{D} thesis, Oxford, 1966.

\bibitem{aczel-normal-functors}
\bysame, \emph{Normal functors on linear orderings}, Journal of Symbolic Logic
  \textbf{32} (1967), p.~430, abstract to a paper presented at the annual
  meeting of the Association for Symbolic Logic, Houston, Texas, 1967.

\bibitem{freund-thesis}
Anton Freund, \emph{Type-{T}wo {W}ell-{O}rdering {P}rinciples, {A}dmissible
  {S}ets, and ${\Pi}^1_1$-{C}omprehension}, Ph{D} thesis, University of Leeds,
  2018, \url{http://etheses.whiterose.ac.uk/20929/}.

\bibitem{freund-equivalence}
\bysame, \emph{${\Pi}^1_1$-comprehension as a well-ordering principle},
  Advances in Mathematics \textbf{355} (2019), article no.~106767, 65~pp.

\bibitem{freund-categorical}
\bysame, \emph{A categorical construction of {B}achmann-{H}oward fixed points},
  Bulletin of the London Mathematical Society \textbf{51} (2019), no.~5,
  801--814.

\bibitem{freund-computable}
\bysame, \emph{Computable aspects of the {B}achmann-{H}oward principle},
  Journal of Mathematical Logic \textbf{20} (2020), no.~2, article no.~2050006,
  26~pp.

\bibitem{freund-ordinal-exponentiation}
\bysame, \emph{A note on ordinal exponentiation and derivatives of normal
  functions}, Mathematical Logic Quarterly \textbf{66} (2020), no.~3, 326--335.

\bibitem{freund-rathjen_derivatives}
Anton Freund and Michael Rathjen, \emph{Derivatives of normal functions in
  reverse mathematics}, Annals of Pure and Applied Logic \textbf{172} (2021),
  no.~2, article no.~102890, 49~pp.

\bibitem{girard-pi2}
Jean-Yves Girard, \emph{${\Pi^1_2}$-logic, part 1: Dilators}, Annals of Pure
  and Applied Logic \textbf{21} (1981), 75--219.

\bibitem{girard87}
\bysame, \emph{Proof theory and logical complexity, volume 1}, Studies in Proof
  Theory, Bibliopolis, Napoli, 1987.

\bibitem{girard-book-part2}
\bysame, \emph{Proof theory and logical complexity, volume 2},
  \url{http://girard.perso.math.cnrs.fr/Archives4.html} (accessed on November
  21, 2017), \noopsort{1987b}1982, Manuscript.

\bibitem{hirst94}
Jeffry~L. Hirst, \emph{Reverse mathematics and ordinal exponentiation}, Annals
  of Pure and Applied Logic \textbf{66} (1994), 1--18.

\bibitem{marcone-montalban}
Alberto Marcone and Antonio Montalb{\'a}n, \emph{The {V}eblen functions for
  computability theorists}, Journal of Symbolic Logic \textbf{76} (2011),
  575--602.

\bibitem{schuette77}
Kurt Sch{\"u}tte, \emph{Proof theory}, Grundlehren der Mathematischen
  Wissenschaften, vol. 225, Springer, Berlin, 1977.

\bibitem{simpson09}
Stephen~G. Simpson, \emph{Subsystems of second order arithmetic}, Perspectives
  in Logic, Cambridge University Press, 2009.

\end{thebibliography}

\end{document}